\newtheorem{theorem}{Theorem}[section]
\newtheorem{corollary}[theorem]{Corollary}
\newtheorem{question}[theorem]{Question}
\theoremstyle{definition}
\newtheorem{definition}[theorem]{Definition}
\newtheorem{example}[theorem]{Example}
\theoremstyle{remark}
\newtheorem{remark}[theorem]{Remark}
\newtheorem{property}[theorem]{Property}
\numberwithin{equation}{section}
\begin{document}

\title{On Positivity for the Peterson Variety}

\author{Rebecca Goldin}
\address{Department of Mathematical Sciences MS 3F2, George Mason University, Fairfax, VA 22030}
\curraddr{Department of Mathematical Sciences MS 3F2, George Mason University, Fairfax, VA 22030}
\email{rgoldin@gmu.edu}
\thanks{The author was supported in part by NSF Grant \#2152312}


\subjclass{Primary}
\date{August 31, 2022}

\dedicatory{}

\keywords{Algebraic geometry, Schubert calculus, equivariant cohomology}

\begin{abstract}
We aim in this manuscript to describe a specific notion of {\em geometric positivity} that manifests in cohomology rings associated to the flag variety $G/B$ and, in some cases, to subvarieties of $G/B$. We offer an exposition on the the well-known geometric basis of the homology of $G/B$ provided by Schubert varieties, whose dual basis in cohomology has nonnegative structure constants. 
In recent work \cite{GolMihSin21} we showed that the equivariant cohomology of Peterson varieties satisfies a positivity phenomenon similar to that for Schubert calculus for $G/B$. Here we explain how this positivity extends to this particular nilpotent Hessenberg variety, and offer some open questions about the ingredients for extending positivity results to other  Hessenberg varieties.
\end{abstract}

\maketitle

\newtheorem*{poetry}{}

\renewcommand\eqref[1]{(\ref{#1})}
\newcommand\MMN{{\mathcal M}_N}
\newcommand\A{\textsc{a}}
\newcommand\B{\textsc{b}}
\newcommand\notdivides{\,\not\!\divides\,}
\newcommand\divides{\,{\mid}\,}
\newcommand\lie[1]{{\mathfrak{#1}}}
\newcommand\lien{{R_N^{\Delta=0}}}
\newcommand\Pf{{\em Proof. }}
\newcommand\Tr{{\rm Tr\,}}
\newcommand\Proj{{\rm Proj\,}}
\newcommand\iso{{\ \cong\ }}
\newcommand\tensor{{\otimes}}
\newcommand\Tensor{{\bigotimes}}
\newcommand\QED{\hfill $\Box$} 
\newcommand\calO{{\mathcal O}}
\newcommand\calC{{\mathcal C}}
\newcommand\calS{{\mathcal S}}
\newcommand\calT{{\mathcal T}}
\newcommand\Pet{{\bf P}}
\newcommand\frob{\bowtie}
\newcommand\onto{\mathop{\twoheadrightarrow}}
\newcommand\into{\operatorname*{\hookrightarrow}}
\newcommand\Ex{\noindent{\em Example. }}
\newcommand\union{\bigcup}
\newcommand\Pone{{\mathbb P}^1}
\newcommand\pt{pt}
\newcommand\calL{{\mathcal L}}
\newcommand\Id{{\bf 1}}
\newcommand\Td{{\rm Td}}
\newcommand\vol{{\rm vol}}
\newcommand\CP{{\mathbb C \mathbb P}}
\newcommand\PP{{\mathbb P}}
\newcommand\ind{{\rm index}}
\newcommand\reals{{\mathbb R}}
\newcommand\complexes{{\mathbb C}}
\newcommand\CC{{\mathbb C}}
\newcommand\integers{{\mathbb Z}}
\newcommand\rationals{{\mathbb Q}}
\newcommand\onehalf{\frac{1}{2}}
\newcommand\naturals{{\mathbb N}}
\newcommand\Star{\operatornamewithlimits\bigstar}
\newcommand\dec{\hbox{\ \rm\tiny decreasing}}
\newcommand\area{\operatorname{area}}
\newcommand\ol{\overline}
\newcommand\Grmn{{\rm Gr}^m(\complexes^n)}
\newcommand\<{\langle}
\renewcommand\>{\rangle}
\newcommand\junk[1]{}
\newcommand\To{\longrightarrow}
\newcommand\Coprod{\coprod}
\newcommand\Union{\bigcup}
\newcommand\dash{\text{--}}

\newcommand{\transv}{\mathrel{\text{\tpitchfork}}} 
\makeatletter
\newcommand{\tpitchfork}{%
  \vbox{
    \baselineskip\z@skip
    \lineskip-.48ex
    \lineskiplimit\maxdimen
    \m@th
    \ialign{##\crcr\hidewidth\smash{$-$}\hidewidth\crcr$\pitchfork$\crcr}
  }%
}
\makeatother

\newcommand\dfn{\bf} 
\newcommand\indentme{\parindent 2 cm}
\newcommand\GLN{{{GL_N}}}
\newcommand\glN{\lie{gl}_N}
\newcommand\SLN{{{SL_N}}}
\newcommand\FlN{{Flags(\vec N)}}

\newcommand\thmlabel[1]{\label{thm:#1}}
\newcommand\proplabel[1]{\label{prop:#1}}
\newcommand\seclabel[1]{\label{sec:#1}}
\newcommand\sseclabel[1]{\label{ssec:#1}}
\newcommand\lemlabel[1]{\label{lem:#1}}
\newcommand\figlabel[1]{\label{fig:#1}}

\newcommand\thmref[1]{theorem \ref{thm:#1}}
\newcommand\propref[1]{proposition \ref{prop:#1}}
\newcommand\secref[1]{\S\ref{sec:#1}}
\newcommand\ssecref[1]{\S\ref{ssec:#1}}
\newcommand\lemref[1]{lemma \ref{lem:#1}}
\newcommand\figref[1]{figure \ref{fig:#1}}

\newcommand\Thmref[1]{Theorem \ref{thm:#1}}
\newcommand\Propref[1]{Proposition \ref{prop:#1}}
\newcommand\Secref[1]{\S\ref{sec:#1}}
\newcommand\Ssecref[1]{\S\ref{ssec:#1}}
\newcommand\Lemref[1]{Lemma \ref{lem:#1}}
\newcommand\Figref[1]{Figure \ref{fig:#1}}

\newcommand\cycinside[1]{#1}
\newcommand\cyc[1]{\circlearrowleft (\cycinside{#1})}
\newcommand\cp{\bullet} 
\newcommand\der{\partial}
\newcommand\mdeg{{\rm m}\!\deg}
\newcommand\codim{{\rm co}\!\dim}
\newcommand\SN{\mathcal{S}_N}
\newcommand\MNC{{\text{Mat}_N}}
\newcommand\MnC{{M_n}}
\newcommand\rank{{\rm rank\ }}
\newcommand{\comment}[1]{$\star${\sf\textbf{#1}}$\star$}
\newcommand\Br{\mathcal{B}r}
\newcommand\wt[1]{{\widetilde #1}}
\newcommand{\todo}[2][0.9]{\vspace{1 mm}\par \noindent
 \framebox{\color{red}\begin{minipage}[c]{#1
\textwidth} \tt #2 \end{minipage}}\vspace{1 mm}\par}

\newcommand\AKrem[1]{{\color{teal} #1}}
\newcommand\acirc{{\mathring a}}
\newcommand\dcirc{{\mathring d}}
\newcommand\deltacirc{{\accentset{\circ} \delta}}
\newcommand\deltop{{\overset{\tiny\Leftcircle} \delta}}

\newcommand\bardelta{{\overline\delta}}
\newcommand\Demopisobaric{\delta}
\newcommand\Demop{\deltacirc}

\newcommand\ea{e^\alpha}
\newcommand\ema{e^{-\alpha}}
\newcommand\da{\Demopisobaric_\alpha}
\newcommand\bda{\Demop_\alpha}
\newcommand\ra{r_\alpha}


\section{Introduction to Geometric Positivity for $G/B$ and Other Varieties}
The term {\em positivity}, which may sometimes be more appropriately termed ``nonnegativity" can have many different manifestations and meanings in the context of combinatorics and algebraic geometry. We aim in this manuscript to describe a specific notion of {\em geometric positivity} that manifests in cohomology rings associated to the flag variety $G/B$ and, in some cases, to subvarieties of $G/B$. In Section~\ref{se:positivityG/B} we review the well-known geometric basis of homology provided by Schubert varieties, whose dual basis in cohomology has nonnegative structure constants. In fact, these structure constants arise as the number of points occurring in a certain 0-dimensional transverse intersection of complex algebraic varieties. While the structure constants are nonnegative, a combinatorially positive formula for them is known only in some cases. The classical Littlewood-Richardson rule expresses the Littlewood-Richardson coefficients as the number of Young tableaux that satisfy certain properties; equivariant structure constants for the Grassmannian of $k$-planes in $\mathbb C^n$ can be counted positively using Knutson-Tao puzzles. Schubert calculus is concerned both with finding combinatorially positive formulas for these structure constants (and their generalizations) as well as with proving that structure constants are  nonnegative (or have an analogous positivity property). 

Positivity for the integral cohomology ring of $G/B$ has a generalization in the $T$-equivariant cohomology of $G/B$,  given in Definition~\ref{def:positivity}. 
We assume that we have a distinguished set of positive roots $\alpha_1, \dots, \alpha_d$, or monomials in the equivariant cohomology of a point, where $d=\dim T$. For a $T$-invariant subvariety $X$ of $G/B$, a basis $\{x_i\}$  of $H_T^*(X)$ as a module over $H_T^*(pt)$ is considered {\em Graham positive} if 
$$
x_i x_j = \sum_k a_{ij}^k x_k
$$
defines structure constants $a_{ij}^{k}\in H_T^*(pt)$ that are nonnegative linear combinations of monomials in $\alpha_1, \dots, \alpha_d$. A wonderful consequence of Graham positivity for equivariantly formal spaces (a technical condition satisfied by $G/B$ and Peterson varieties) is that it implies positivity in the usual sense, as the constant part of the polynomial in $a_{ij}^k$ are the coefficients that arise in the product of ordinary cohomology classes (see Property~\ref{prop:formality}).

The basis $\{x_i\}$ is the dual basis to a set of $T$-invariant varieties forming a basis for the equivariant homology of $X$. When a basis $\{x_i\}$ is both Graham positive and also the dual of homology classes associated to $T$-invariant subvarieties, we say $X$ satisfies  {\em geometric positivity}. This phenomenon occurs for $X=G/B$, as the Schubert classes $\{\sigma_w: w\in W\}$ multiply with nonnegative structure constants, and arise as the dual basis for $\{[X_w]: w\in W\}$. 

The goals of this manuscript are to provide an exposition on the well-known {\em geometric positivity for equivariant cohomology} for $G/B$ that offers the ingredients to apply it to certain subvarieties of $G/B$. 
We focus specifically on a subvariety $\Pet$ called the {\em Peterson variety} (Definition~\ref{def:Pet}). The Peterson variety was introduced by Dale Peterson, but appeared in published work by Kostant \cite{Kos96}, and Rietch \cite{Rie01} \cite{Rie03} on the quantum cohomology of flag varieties. 

Peterson varieties arise as a special case of a large class of varieties called Hessenberg varieties (Definition \ref{def:Hess}), which are themselves parameterized by an element $x$ in the Lie algebra $\mathfrak g$ of $G$, and Borel-invariant subspace $H$ of $\mathfrak g$. 

Hessenberg varieties were first defined in generality by De Mari, Procesi and Shayman \cite{deMProSha92}, who explored the geometry of regular semisimple Hessenberg varieties.  The cohomology rings of regular nilpotent Hessenberg varieties were studied in, for example \cite{AbeHarHorMas19}, while the Poincar\'e polynomials and geometric bases were studied in \cite{Pre17} and \cite{EnoHorNagTsu22}. In \cite{Kly85}, \cite{Kly95} and \cite{AbeFujZen20}, the authors proved and exploited the structure of flat families of Hessenberg varieties.
Other important special cases occur when $H$ is restricted: Springer varieties, whose geometry was explored in \cite{GorKotMac06}, arise as Hessenberg varieties when $H=\mathfrak b$, the Lie algebra of the Borel $B$. 
 Tymoczko \cite{Tym07} proved that all Hessenberg varieties are paved by affines in Type A, while Precup \cite{Pre13} extended these results for regular elements to other Lie types.  Hessenberg varieties are also deeply connected to combinatorics; 
a recent survey on many combinatorial aspects is \cite{AbeHor20} and the references therein.  

The Peterson variety admits an action of a one-dimensional torus $S \subset T$ with finitely many fixed points. The subgroup $S$ occurs as those elements of $T$ that stabilize a principal nilpotent element of $\mathfrak g$; in particular, $\alpha_i|_S$ is independent of $i$ for all positive simple roots $\alpha_i$. 
Harada, Horiguchi and Masuda proved a presentation of $H_S^*(\Pet;\mathbb Q)$ in \cite{HarHorMas15} unrelated to Schubert calculus. In \cite{HarTym11} and \cite{Dre15}, the authors proved that there is a basis of the equivariant cohomology given by pulling back to $\Pet$ a specific set of Schubert classes from $H_S^*(G/B)$. They proved a  positive equivariant Monk formula for this basis, but did not provide a geometric basis.

As we shall see, an
essential ingredient to positivity for the equivariant cohomology of $G/B$ is that it has a paving by $B$-invariant (or $B$-stable) affine cells.
The property of being paved by affines is enjoyed by regular Hessenberg varieties, including the Peterson variety; however the $B$-invariance is not. Despite this missing ingredient, we show here (originally proved by the author together with Mihalcea and Singh, in \cite{GolMihSin21}) that Peterson varieties have equivariant geometric positivity using properties of $G/B$ and the inclusion of the Peterson variety into $G/B$. More specifically, we show that there exists a basis $\{p_K\}_{K\subset \Delta}$ of $H_S^*(\Pet)$, where the indexing set is the set of subsets of the simple positive roots, making $H_S^*(\Pet)$ Graham positive. In particular,
$$
p_Ip_J = \sum_K b_{IJ}^K p_K
$$ for monomials $b_{IJ}^K \in H_S^*(pt)$ that are nonnegative multiples of a power of $t := \alpha_i|_S$. Furthermore, this basis occurs as a dual basis to a set of $S$-invariant subvarieties $P_K$ whose homology classes are a basis for the equivariant homology of $\Pet$. In type A, the author and Gorbutt \cite{GolGor22} proved positivity combinatorially, i.e. they found a manifestly positive formula for the structure constants $b_{IJ}^K$. 

In this manuscript we focus on the positivity emerging from the product structure of a specially chosen basis of cohomology. 
We establish notation and basic definitions in Section~\ref{se:defs}, followed by a brief explanation of key properties satisfied by the equivariant cohomology of $G/B$ in Section~\ref{se:ecohomologyG/B}, including an alternate notion of positivity obtained through localization. We recall some geometric properties of Hessenberg varieties in Section~\ref{se:paving}, and explain the reasons behind geometric positivity for $G/B$ in Section~\ref{se:positivityG/B}, for both ordinary and equivariant cohomology. Section~\ref{se:positivityPeterson} explores the geometry of $\Pet$ and demonstrates that Peterson Schubert calculus satisfies geometric positivity.  Finally, in Section~\ref{se:Hess} we  share some open questions about whether the analogous cohomology basis for  other nilpotent Hessenberg varieties have the same positivity property.

The author thanks an early referee for useful suggestions. This work was partially supported by NSF Grant \#2152312.

\section{Notation and Definitions}\label{se:defs}
For the purposes of this manuscript, we let $G$ be a complex semi-simple Lie group, $B$ a Borel subgroup, and $B^-$ an opposite Borel.  Denote by $\mathfrak g$ the Lie algebra of $G$, and $\mathfrak b$ the Lie algebra of $B$. Let $T = B\cap B^-$ be the maximal torus obtained by the intersection, and $\mathfrak h = Lie(T)$ its Lie algebra. We let $W = N(T)/T$ denote the Weyl group for $G$,  $\Delta$ the set of simple positive roots, and $\phi^+$ the set of positive roots.  There is a length function $\ell: W\rightarrow \mathbb N$ sending each Weyl group element to the minimal number of  reflections over simple roots required to describe that element; there is a unique longest-length element of $W$, which we denote by $w_0$.

We call $G/B$ the {\em flag manifold}. Observe that $T$ acts on $G/B$ the left by group multiplication, which is well-defined on the level of cosets; the fixed points of this action are isolated, and we denote the set of them by $(G/B)^T$. We will frequently identify them with the Weyl group, via the identification 
$$\tilde{v}B\leftrightarrow v\in W,$$
where $\tilde{v}$ is any lift of $v$ in $N(T)$.  Since the coset doesn't depend on the lift, we abuse notation and call the coset $vB$. 

For each $v\in W$, the closure of the $B$ orbit on $vB$ is given by $X_v = \overline{BvB/B}$. These are called the {\em Schubert varieties} indexed by $v\in W$. Similarly, $X^v =  \overline{B^-vB/B}$ are the closures of the $B^-$ orbit on $vB$, and are called {\em opposite Schubert varieties}.

Our initial focus is on a specific regular, nilpotent case, given as follows. Let $e \in \bigoplus_{\alpha\in \Delta} \mathfrak g_\alpha$ be a principal nilpotent element, where $\mathfrak g_\alpha$ is the weight space of the simple positive root $\alpha$ in $\mathfrak g$. Let $G^e$ be the centralizer of $e$ in $G$.
\begin{definition}\label{def:Pet}
The Peterson variety 
\begin{equation}\label{eq:defP}
\Pet:= \overline{G^e. w_0B} \hookrightarrow G/B, 
\end{equation}
is the closure of the $G^e$-orbit of $w_0B$ inside the flag manifold $G/B$. 
\end{definition}

An alternate definition of Peterson varieties is provided as a special case of a larger class of varieties called {\em Hessenberg varieties,} parameterized by an element $x\in \mathfrak g$ and certain subspaces of $\mathfrak g$.  A Hessenberg space is a subspace $H\subseteq \mathfrak g$ satisfying $\mathfrak b\subseteq H$ and $[\mathfrak b, H]\subseteq H$. 
\begin{definition}\label{def:Hess}
The Hessenberg variety associated with $x\in \mathfrak g$, and a Hessenberg space $H$ is
\begin{equation}\label{eq:defHess}
Hess(x, H) = \{gB\in G/B: g^{-1}\cdot x\in H\},
\end{equation}
where $g^{-1}\cdot x$ indicates the adjoint action.
\end{definition}
We refer to regular, semisimple, or nilpotent Hessenberg varieties when $x$ is a regular, semisimple or nilpotent element, respectively, of $\mathfrak g$. 
Observe that when $H=\mathfrak g$,  $Hess(x, H)= G/B$, independent of $x$. Similarly when $x=0$, $Hess(x,H) = G/B$.

\begin{definition}
The Peterson variety is a regular nilpotent Hessenberg arising when $x=e$ as above, and
$$
H = H_0:= \mathfrak b\oplus \bigoplus_{\alpha\in \Delta} \mathfrak g_{-\alpha}.
$$ 
\end{definition}
A proof that $\Pet = Hess(e, H_0)$ is given in \cite{GolMihSin21}. 

\begin{example}
Let $G=Sl(2, \mathbb C)$, $B$ upper triangular matrices, $B^-$ lower triangular matrices, and $T = B\cap B^-$ the diagonal matrices. Then $\mathfrak g = H_0 =\mathfrak b \oplus \mathfrak g_{-\alpha}$, as there is only one simple positive root $\alpha$. Therefore, $\Pet = Sl(2,\mathbb C)/B.$
\end{example}

\begin{example} For $G = Sl(n,\mathbb C)$, there is an equivalent formulation for the Peterson variety. Let $g_i$ denote the $i$th column vector of the matrix $g\in G$, and let $V_i = \langle g_1, \dots, g_i\rangle$ denote the span of the first $i$ columns of $g$. Choose $e$ to be the element of $sl(n, \mathbb C)$ with $1$s above the diagonal, and $0$s elsewhere, i.e.
$$
e = \begin{pmatrix} 0 & 1 & 0 & \dots & 0&0\\ 0 & 0 & 1 & \cdots & 0 & 0 \\   \vdots &\vdots &\ddots& \ddots&\vdots &\vdots 
\\  
0 & 0 &  \cdots &0 & 1 & 0 \\ 
0 & 0 &\cdots &0 & 0 & 1 \\ 0 & 0&\cdots &0 &0&0
\end{pmatrix}.
$$ Then $gB\in Hess(x, H_0)$ if and only if $eV_i \subseteq V_{i+1}$ for $i=1, \dots, n-1$. For $n=3$, the elements $gB$ satisfying these conditions can be represented by matrices $g$ of any of the forms
\begin{align*}
\begin{pmatrix}
a & b & 1\\
b & 1 & 0\\
1 & 0 & 0
\end{pmatrix},
 \quad
\begin{pmatrix}
c& 1 & 0\\
1& 0 & 0\\
0 & 0 & 1
\end{pmatrix},
 \quad
\begin{pmatrix}
1 & 0 & 0\\
0 & d & 1\\
0 & 1 & 0
\end{pmatrix},\quad
\begin{pmatrix}
1 & 0 & 0\\
0 & 1 & 0\\
0 & 0 & 1
\end{pmatrix},
\end{align*}
for $a, b, c, d$ any complex numbers.
\end{example}

The cocharacter $h$ of $T$ satisfying $\alpha(h) =2$ for all $\alpha \in \Delta$
determines a one dimensional subtorus $S \subset T$
satisfying $\alpha|_S=\alpha'|_S$ for any $\alpha,\alpha'\in\Delta$.
Set $t:=\alpha|_S$ for $\alpha\in\Delta$;
then $t\in H^*_S(pt)$.

\section{Properties of $T$-equivariant cohomology for $G/B$}\label{se:ecohomologyG/B}

We use equivariant cohomology and equivariant Borel-Moore homology over $\mathbb Z$.  
We collect some useful facts about the $T$-equivariant cohomology of $G/B$, without proof or explanation. The reader may refer to \cite{Ful98} (Ch 19), \cite{Ful97} (Appendix B), \cite{CG97} (\S 2.6) for additional details about cohomology and Borel-Moore homology, and \cite{Gra01} for the equivariant versions.

\begin{property} The equivariant cohomology of a point is 
$$
H_T^*(pt) = S(\mathfrak h^*)\cong \mathbb Z[x_1,\dots, x_n]
$$
where $S(\mathfrak h^*)$ is the symmetric algebra in the dual of the Lie algebra of $T$, and $\dim T=n$. If one chooses a splitting of $T$, we may alternatively use polynomials of $\mathfrak h$. Here $x_i$ are degree 2 monomials.
\end{property}
\begin{property}\label{prop:inclusionfps}  The cohomology of the fixed point set $(G/B)^T$ is
$$H_T^*((G/B)^T) = \bigoplus_{v\in W}H_T^*(vB/B) = \bigoplus_{v\in W} \mathbb Z[x_1,\dots, x_n]
$$
\end{property}
\begin{property}\label{prop:localizationincl}  The inclusion map $(G/B)^T \hookrightarrow G/B$ induces an {\em inclusion}
$$
H_T^*(G/B) \hookrightarrow H_T^*((G/B)^T),
$$ 
called the {\em localization map.}
\end{property}
\begin{property}\label{prop:formality}  By using the ordinary cohomology on the Borel construction, the inclusion of $G/B$ as a fiber into the total space $G/B\times_T ET$ induces a 
 {\em surjection}
$$H_T^*(G/B) \twoheadrightarrow H^*(G/B),$$
called the {\em forgetful map.} This property is sometimes called {\em equivariant formality.}
\end{property}
\begin{property}  $H_T^*(G/B)$ is a free module over $H_T^*(pt)$. 
\end{property}

\begin{property}\label{prop:cohombasis}  A basis for the module $H_T^*(G/B)$ over $H_T^*(pt)$ is the collection of {\em equivariant Schubert classes} 
$$\{\sigma_v\in H_T^*(G/B):\  v\in W\}.$$
\end{property}

\begin{property}  The set  of Schubert varieties $\{[X_v]\}_{v\in W}$ is a basis for the ordinary homology $H_*(G/B)$ as a module over its coefficient ring $\mathbb Z$. An equivalent basis is given by the set of opposite Schubert varieties $\{[X^v]\}_{v\in W}$. In ordinary homology, $[X_v]= [X^{w_0v}]$.
\end{property}
\begin{property}\label{prop:schubTbasis}  The set  of Schubert varieties $\{[X_v]\}_{v\in W}$ is a basis for the equivariant homology $H^T_*(G/B)$ as a module over the equivariant cohomology of a point. Another basis is given by the set of opposite Schubert varieties $\{[X^v]\}_{v\in W}$. In equivariant homology, $[X_v]\neq [X^{w_0v}]$.
\end{property}

Each class $\sigma_v$ is dual to an {\em opposite} Schubert varieties in a specific sense: when $X^v$ is smooth, or more generally at the smooth locus, the corresponding Schubert class is the Euler class of the normal bundle to $X^v$. While $\sigma_v$ is often called the {\em Poincar\'e dual class} to $X^v$, one should not confuse this duality with the {\em Poincar\'e dual basis}. Indeed, the Poincar\'e dual basis of $\{\sigma_v\}_{v\in W}$ is given by $\{[X_v]\}_{v\in W}$, the equivariant homology classes corresponding to the Schubert varieties, not the opposite Schubert varieties.

This behavior between the bases $\{\sigma_v\}_{v\in W}$ and $\{[X_v]\}_{v\in W}$ of equivariant cohomology and equivariant homology, respectively, is formalized using the well-known pairing of classes on the manifold $G/B$. 
\begin{property}\label{prop:coSchubTbasis}
Let 
$$\langle \cdot , \cdot \rangle: H^*_T(G/B) \otimes H_*^T(G/B) \to H^*_T(pt)
$$ be defined by 
$\langle a,b \rangle = \int_b a.$
Then for all $v, w\in W$, 
$$
\langle \sigma_v , X_w\rangle = \delta_{v,w}.
$$
\end{property}

\begin{example} Let $G= Gl(3,\mathbb C)$, with $T$ the diagonal matrices. Consider the Schubert class $\sigma_v$ associated to $v=[231]$, obtained as the Poincar\'e dual class to the opposite Schubert variety $X^v$. By Proposition~\ref{prop:localizationincl}, $\sigma_v$ is determined by its restriction to the fixed point set. The restrictions can be found using \cite{Bil99}:
\begin{align*}
&\sigma_v\vert_{[123]}=0, \quad && \sigma_v\vert_{[213]}=0, \quad &&\sigma_v\vert_{[132]}=0\\
& \sigma_v\vert_{[231]}=\alpha_1\alpha_3 \quad &&\sigma_v\vert_{[312]}=0,\quad &&\sigma_v\vert_{[321]}= \alpha_1\alpha_3.
\end{align*}
Here $\alpha_1$ and $\alpha_2$ are the standard positive simple roots, and $\alpha_3=\alpha_1+\alpha_2$.
\end{example}

\section{Paving by Affines}\label{se:paving}

Recall the following definition: 
\begin{definition} 
A variety $X$ is said to be {\em paved by affine spaces} or {\em paved by affines} if there is a sequence of closed subvarieties
$$
Y_0\subset Y_1\subset\cdots \subset Y_n=X
$$
such that $Y_i-Y_{i-1}$ is a finite, disjoint union of affine spaces.\footnote{Here we follow the definition provided by \cite{Pre13}, which does not restrict the dimension of the affine spaces occurring in $Y_i-Y_{i-1}$.}  We say that $X= \bigsqcup{Y_i-Y_{i-1}}$ {\em is a paving by affines.} 
\end{definition}

The flag variety $G/B$ has a paving by affines, as we show in the following example.
\begin{example}\label{ex:G/Bpaving} Consider the flag variety $G/B$ and Bruhat cells $X^0_w=BwB/B$ for $w\in W$. Bruhat cells are pairwise disjoint and cover all of $G/B$. Furthermore, each Bruhat cell $X^0_w$ is isomorphic to the affine space $\mathbb A^{\ell(w)}$. It is well known that $X_w = \overline{X^0_w}$ satisfies:
$$X_w= \bigsqcup_{u\in W, u\leq w} X^0_u, $$
Let $Y_i = \bigcup_{w\in W, \ell(w) = i}X_w$ be the union of those Schubert varieties of dimension $i$. Then $Y_i-Y_{i-1} = \bigsqcup_{w\in W, \ell(w)=i} X^0_w$ is a union of disjoint affine cells, and $G/B=\bigsqcup_{w\in W} X^0_w$ is a paving by affines. 
\end{example} 

De Concini, Lusztig and Procesi proved that Springer fibers are paved by affines \cite{deCLusPro88}. Tymoczko showed that all Hessenberg varieties in type A have an affine paving \cite{Tym07}. Precup generalized the result in \cite{Pre13} to all Lie types for regular Hessenberg varieties as well as a subset of operators whose nilpotent part are regular in a Levi subalgebra of $\mathfrak g$; we state her result here.

\begin{theorem}[Precup] Fix a Hessenberg space $H$ with respect to $\mathfrak{b}$.
\begin{enumerate}
\item Suppose $x \in \mathfrak{g}$ has Jordan decomposition $s+n$ with $s$ semisimple and $n$ regular in some Levi subalgebra  of the Lie algebra of the centralizer $Z_G(s)$. Then $Hess(x, H)$ has a paving by affines.
\item If $x$ is nilpotent and regular in a Levi subalgebra of  $ \mathfrak{g}$. Then, there is an affine paving of $Hess(x, H)$ given by the intersection of each Schubert cell in $G/B$ with $Hess(x, H)$.
\end{enumerate}
\end{theorem}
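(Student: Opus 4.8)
The plan is to establish part (2) first, since it is both the geometric heart of the argument and the $s=0$ special case of part (1), and then to bootstrap part (1) from it by a Bialynicki--Birula reduction along the semisimple direction.

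\emph{Part (2).} Fix $x=n$ nilpotent and regular in a Levi subalgebra $\mathfrak{l}\subseteq\mathfrak{g}$. Using Example~\ref{ex:G/Bpaving} as the model, I would set $Y_i=\bigcup_{\ell(w)=i}X_w$ and filter $Hess(n,H)$ by the closed subvarieties $Y_i\cap Hess(n,H)$, so that $Y_i\cap Hess(n,H)-Y_{i-1}\cap Hess(n,H)=\bigsqcup_{\ell(w)=i}\bigl(X^0_w\cap Hess(n,H)\bigr)$. The whole statement then reduces to one claim: \emph{for each $w\in W$ the intersection $X^0_w\cap Hess(n,H)$ is either empty or isomorphic to an affine space.} To prove this I would parametrize the Bruhat cell. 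Writing $U_w=U\cap wU^-w^{-1}$, every point of $X^0_w$ is uniquely $uwB$ for $u=\prod_\beta u_\beta(c_\beta)$, the product over the $\ell(w)$ positive roots $\beta$ that $w^{-1}$ sends to negative roots, so $U_w\cong\mathbb{A}^{\ell(w)}$ in the coordinates $c=(c_\beta)$. The Hessenberg condition $(uw)^{-1}\cdot n=w^{-1}u^{-1}\cdot n\in H$ becomes, after projecting onto a complement of $H$ in $\mathfrak{g}$ and expanding in weight spaces, a finite polynomial system $f_\gamma(c)=0$.

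\emph{Main obstacle.} The crux is showing this system cuts out an affine space. My approach would be to grade $\mathfrak{g}=\bigoplus_i\mathfrak{g}(i)$ by the semisimple element of an $\mathfrak{sl}_2$-triple $(n,h,n^-)$ through $n$, and to order the coordinates $c_\beta$ compatibly with this grading. Regularity of $n$ in $\mathfrak{l}$ should force each equation $f_\gamma$ to carry a distinguished linear leading term in a single coordinate, with the remaining coordinates entering only through strictly lower-order corrections; this triangular structure then lets one solve iteratively and exhibit $X^0_w\cap Hess(n,H)$ as the graph of a morphism over a coordinate subspace $\mathbb{A}^{d_w}\subseteq\mathbb{A}^{\ell(w)}$, hence as an affine space. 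Establishing this triangularity uniformly in $w$ and for an arbitrary Hessenberg space $H$ — rather than through the explicit type A matrix coordinates of Tymoczko \cite{Tym07} — is where essentially all of the difficulty concentrates, and is precisely the content supplied in \cite{Pre13}.

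\emph{Part (1).} With part (2) in hand I would treat the mixed case $x=s+n$ by a Bialynicki--Birula argument along the semisimple part. Set $L=Z_G(s)^\circ$, a reductive subgroup with $n$ regular in a Levi of $\mathfrak{l}=\mathrm{Lie}(L)$, and choose a cocharacter $\lambda$ with $Z_G(\lambda)=L$. The limits $\lim_{t\to 0}\lambda(t)\cdot gB$ decompose $G/B$ into affine cells fibered over the fixed locus $(G/B)^\lambda$, which is a disjoint union of flag varieties of $L$. I would then verify that this flow is compatible with the Hessenberg condition, so that $Hess(x,H)$ is carried in the limit onto a disjoint union of nilpotent Hessenberg varieties $Hess_L(n,H\cap\mathfrak{l})$ sitting inside these $L$-flag varieties, with each stratum of $Hess(x,H)$ an affine bundle over its limit. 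Since an affine bundle over a base with an affine paving again has an affine paving — restrict to each cell $\cong\mathbb{A}^{d}$, over which the bundle is trivial, and pull back the filtration — part (2) applied inside $L$ completes the argument. The delicate point is checking compatibility of the $\lambda$-flow with the variety $Hess(x,H)$, which is not itself $\lambda$-invariant, and identifying the resulting limits with genuine Hessenberg varieties in $L$; this is the second place where the real work lies.
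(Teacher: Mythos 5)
The paper does not prove this statement at all: it is quoted verbatim as Precup's theorem with a citation to \cite{Pre13}, and appears in Section~\ref{se:paving} purely as background. So there is no in-paper proof to compare against; the only question is whether your sketch would stand on its own as a proof, and it does not. Your outline does track the actual two-stage strategy of \cite{Pre13} accurately --- intersect with Schubert cells and prove affineness in the nilpotent case, then reduce the mixed case to nilpotent Hessenberg varieties in $Z_G(s)$ via a one-parameter-subgroup limit --- and the reduction of part (2) to the single claim that each nonempty $X^0_w\cap Hess(n,H)$ is an affine space is the right reduction, with the filtration by $Y_i\cap Hess(n,H)$ handled correctly.

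The gap is that at both places where the theorem's content actually lives, you stop and say ``this is precisely the content supplied in \cite{Pre13}'' or ``this is where the real work lies.'' In part (2), the assertion that regularity of $n$ in $\mathfrak{l}$ forces each $f_\gamma$ to have a distinguished linear leading term in a single coordinate, triangularly in a grading-compatible order, uniformly in $w$ and $H$, is not argued --- and it is not obvious; it is essentially the whole theorem, and Precup's actual mechanism is a structural analysis of root subgroups in $U_w$ relative to the $\mathfrak{sl}_2$-grading rather than a direct elimination in the polynomial system. In part (1), you correctly flag that $Hess(x,H)$ need not be $\lambda$-invariant and that one must identify the limits of its strata with Hessenberg varieties $Hess_L(n,H\cap\mathfrak{l})$ and prove each stratum is an affine bundle over its limit, but you give no argument for either claim. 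As a roadmap to Precup's proof your proposal is sound; as a proof it reduces the theorem to its own two hardest steps and then cites them away.
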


In particular, a variety with an affine paving has a cellular decomposition with only even-dimensional cells, and thus only even-dimensional homology. The closure of the affine cells form a basis for the homology; if the paving is $T$-invariant, then they form a basis for the $T$-equivariant homology. Dually we shall see (Theorem~\ref{thm:Arabia}) that one obtains a basis for the (equivariant or ordinary) cohomology ring.

Recall that $e$ is regular nilpotent, so Precup's result implies that $\Pet$ has an affine paving. We make this explicit in the case of Peterson varieties in Section \ref{se:positivityPeterson}. Peterson varieties are not smooth, so one cannot immediately extract a cohomological basis; Theorem \ref{thm:duality} states that there is nonetheless a dual basis.

\section{Positivity for $G/B$}\label{se:positivityG/B}
\subsection{Ordinary cohomology}
The ordinary cohomology ring $H^*(G/B)$ exhibits positive structure constants with a direct geometric interpretation.  Recall that each Schubert class $\sigma_v$ occurs as the {\em Poincar\'e dual class} to the variety $[X^v]$. As the Schubert classes form a basis for $H^*(G/B)$, we define structure constants $c_{u,v}^w\in H^*(pt)=\mathbb Z$  by the equation
\begin{equation}\label{eq:ordstructureconstants}
\sigma_u\sigma_v = \sum_{w\in W} c_{u,v}^w \sigma_w.
\end{equation}
We can identify the coefficients by pairing both sides with Schubert varieties. For any $q\in W$, 
\begin{align*}
\langle \sigma_u\sigma_v, [X_q]\rangle &=\langle  \sum_{w\in W} c_{u,v}^w \sigma_w, [X_q]\rangle \\
&=\sum_{w\in W} c_{u,v}^w\langle   \sigma_w, [X_q]\rangle =  c_{u,v}^q.
\end{align*}
On the other hand, we can obtain a geometric meaning to this product, by observing that the pairing counts intersection points.  
The product $\sigma_u\sigma_v$ in cohomology is the algebraic representation of the intersection $X^u\cap gX^v$, for $g\in G$ generic. As long as $X^u\cap gX^v$ and $X_q$ intersect transversally and in a finite set of points, the pairing of the cohomology class $\sigma_u\sigma_v$ and the homology class $[X_q]$ is the number of points, counted with multiplicity and sign, in the triple intersection $X^u\cap gX^v \cap X_q$.  The intersection $X^u\cap X_q$ is known to be transverse \cite{Kle74}, and the intersection $X^u\cap gX^v\cap X_q$ is transverse  for generic $g$. The intersection is $0$-dimensional when $\ell(u)+\ell(v) = \ell(q)$. Finally we observe that $G$ acts on $G/B$ transitively, and preserving its complex structure. Thus all intersection points are positively oriented.  It follows that
$$
c_{u,v}^q = |X^u\cap gX^v\cap X_q|\geq 0
$$
for $g$ in a dense open subset of $G$. 
  In general, it's difficult to calculate intersection numbers of Schubert varieties using geometry and algebraic equations alone, though it can be done in small cases. The calculations are distinctly {\em easier} when using localization techniques. 
 \junk{
 \begin{example}
 In general, it's difficult to calculate intersection numbers of Schubert varieties using geometry alone, though it can be done in small cases. Let $G=Gl(3, \mathbb C)$, with $B$ and $B^-$ upper and lower triangular matrices in $G$, respectively. Choose $u=[213], v=[231], q=[321]$. Then $\ell(u)+\ell(v)=\ell(q)=3$, so the intersection $X^u\cap g X^v\cap X_q$
 is $0$-dimensional. We calculate its cardinality directly. Observe that $X^u$ and $X^v$ are the closures in $G/B$ of the left $B^-$ orbits given as cells:
 \begin{align*}
 C^u = \begin{pmatrix}
 0 & 1 & 0 \\
 1 & 0 & 0\\
c_1 & c_2 & 1
 \end{pmatrix}B \quad\mbox{and}\quad
  C^v = \begin{pmatrix}
 0 & 0 & 1 \\
1 & 0 & 0\\
a & 1& 0
 \end{pmatrix}B,
 \end{align*}
 respectively, 
 for $c_1, c_2, a$ any complex numbers. The variety $X_q = G/B$ since $q$ is the long word. Cleary $X^u\cap X_q=X^u$, and thus we need find the size of $X^u\cap gX^v$ for a generic choice of $g$. For $g = \begin{pmatrix} 
 g_{ij}\end{pmatrix}$, 
 $$
 gX^v =\overline{\left\{ \begin{pmatrix} g_{12}+ag_{13} &  g_{13} &  g_{11}\\ g_{22}+a g_{23}&  g_{23} &  g_{21}\\ g_{32}+a g_{33} &  g_{33} &  g_{31}\end{pmatrix}, \quad a\in \mathbb C \right\}}.
 $$
 In particular, it is a copy of $\mathbb C P^1$ given by the set of lines through the origin in the two-dimensional plane spanned by $\left\langle\begin{pmatrix} g_{12} \\ g_{22} \\ g_{32}\end{pmatrix}, \begin{pmatrix} g_{13}\\ g_{23}\\ g_{33}\end{pmatrix}\right\rangle$. 
 
 From here, one does a bit of linear algebra. A flag in the intersection satisfies both 
 $$\left\langle\begin{pmatrix} g_{12} \\ g_{22} \\ g_{32}\end{pmatrix}, \begin{pmatrix} g_{13}\\ g_{23}\\ g_{33}\end{pmatrix}\right\rangle = \left\langle\begin{pmatrix}  0\\ 1\\ c_1\end{pmatrix},\begin{pmatrix}  1\\ 0\\ c_2\end{pmatrix}\right\rangle \mbox{ and } \left\langle \begin{pmatrix} g_{12} \\ g_{22} \\ g_{32}\end{pmatrix} +a\begin{pmatrix} g_{13}\\ g_{23}\\ g_{33}\end{pmatrix}
 \right\rangle = \left\langle \begin{pmatrix} 0\\ 1\\ c_1\end{pmatrix}\right\rangle$$
 for some $a\in \mathbb C$. 
 Since $g$ is generic, we may presume that intersections occur in the large open cells $C^u$ and $gC^v$.  Some inspection of the resulting equations results in a single solution for $a$, which then determine $c_1$ and $c_2$. 
 Thus, there is only one flag in common between $gX^v$ and $X^u$, i.e. $c_{u,v}^q=1$.   These calculations are distinctly {\em easier} when using localization techniques. 
  \end{example}
  }

\subsection{Equivariant cohomology}
While positivity in the ordinary cohomology of Schubert calculus describes intersections, the equivariant version has additional subtleties, reflecting properties of the choice of Borel subgroup. 

Furthermore, equivariant positivity for Schubert calculus on $G/B$ (or $G/P$) generalizes ordinary cohomology, as we shall see.

The essential ingredients to positivity for the equivariant cohomology of $G/B$ may be parsed as coming from the following properties:
\begin{itemize}
\item 
{\bf Paving by Affines.} {\em $G/B$ has a paving by affine spaces.} While there are many such paving, we use  the  one given by Schubert cells $BwB/B$ for $w\in W$, as in Example~\ref{ex:G/Bpaving}.
\item 
{\bf $B$-Invariance of Paving.} {\em The chosen affine cells paving $G/B$ are $B$-invariant.} Their closures are the Schubert varieties $X_w$ for $w\in W$, and they constitute the effective basis guaranteed to exist by Theorem~\ref{thm:GrahamHom}. 
\item {\bf Positivity for Equivariant Homology.} {\em There is a set of $B$-invariant subvarieties in $G/B$ that provide an effective basis for its $T$-equivariant homology, with {\em positive} coefficients} for any $T$-invariant subvariety. This property holds for any scheme $X$ with a $B$ action, and is stated in Theorem~\ref{thm:GrahamHom}.
\end{itemize}
 
 We first make explicit what we mean by positive.
Let $B$ be a choice of Borel in a complex reductive Lie group $G$, with unipotent radical $N$; let $T\subseteq B$ be a maximal torus. Then $T$ acts on $\mathfrak n= Lie(N)$ with weights $\alpha_1,\dots, \alpha_d$. These are the {\em positive roots} associated with the choice of $(B,T)$.  
\begin{definition}[Graham positivity for equivariant cohomology] \label{def:positivity}
We say that a $T$-invariant scheme $X$ enjoys {\em Graham positivity} if there exists a choice of positive roots $\alpha_1,\dots, \alpha_d$ and a basis $\{\sigma_i\}$ of $H^*_T(X)$ as a $H_T^*(pt)$-module such that 
$$
\sigma_i\sigma_j = \sum_k c_{ij}^k \sigma_k,
$$
with $c_{ij}^k$ a linear combination of monomials in $\alpha_1,\dots, \alpha_d$ with nonnegative coefficients. We call the coefficients $c_{ij}^k$  {\em Graham positive.}
\end{definition}

Positivity for the equivariant homology of $G/B$ is stated in the following theorem due to Graham \cite{Gra01}. 
 
\begin{theorem}[Graham]\label{thm:GrahamHom}
Let $B'$ be a connected solvable group with unipotent radical $N'$, and let $T' \subset
B'$ be a maximal torus, so that $B' = T'N'$. Let $\alpha_1,\dots, \alpha_d$ be the weights of $T'$ acting on
$Lie(N')$. Let $X$ be a scheme with a $B'$-action, and $Y$ a $T'$-invariant subvariety of $X$. Then there
exist $B'$-invariant subvarieties $D_1, \dots, D_k$ of $X$ such that in the equivariant homology $H^{T'}_*(X)$,
$$
[Y]=\sum_i f_i [D_i],
$$
where each $f_i\in H_T^*(pt)$ is a linear combination of monomials in $\alpha_1,\dots, \alpha_d$ with nonnegative, integer coefficients.
\end{theorem}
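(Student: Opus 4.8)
The plan is to reduce the full solvable group $B'$ to a succession of one-parameter unipotent subgroups and to treat a single such subgroup by a ``sweeping'' construction whose positivity is read off from the equivariant geometry of $\mathbb{P}^1$. First I would install the group theory. Since $N'$ is unipotent and $T'$-stable, there is a chain of closed connected subgroups $\{e\}=N^{(0)}\subset N^{(1)}\subset\cdots\subset N^{(m)}=N'$, each normal in $B'$ and $T'$-stable, with $N^{(k)}/N^{(k-1)}\cong\mathbb{G}_a$ on which $T'$ acts through a single weight $\alpha_{i_k}\in\{\alpha_1,\dots,\alpha_d\}$ (a $T'$-refinement of a chief series; the minimal such subgroups are the weight lines in $Z(N')$, which are central in $N'$ and hence normal in $B'$). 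Writing $H_k=T'N^{(k)}$, I would prove by induction on $k$ that $[Y]$ is a nonnegative-integer combination of monomials in $\alpha_1,\dots,\alpha_d$ times classes of $H_k$-invariant subvarieties. The case $k=0$ is trivial because $Y$ is $T'$-invariant, and the case $k=m$ gives the theorem since $H_m=B'$.

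The inductive step rests on a single-subgroup lemma. Fix a $T'$-stable one-parameter subgroup $U\cong\mathbb{G}_a$ of weight $\alpha$ realizing $N^{(k)}/N^{(k-1)}$, and let $W$ be an $H_{k-1}$-invariant subvariety. If $W$ is already $U$-invariant there is nothing to do. Otherwise form the sweep $\tilde W=\overline{U\cdot W}$, of dimension $\dim W+1$, and the flow limit $W_\infty=\lim_{u\to\infty}u\cdot W$, of dimension at most $\dim W$. I would first verify that both are $H_{k-1}U$-invariant: the $U$-invariance of $W_\infty$ is the reparametrization $u'\cdot\lim_u u\cdot W=\lim_u(u'u)\cdot W=W_\infty$, and the $H_{k-1}$-invariance is where I use that $N^{(k)}$ is normal in $B'$, so that for $n\in N^{(k-1)}$ one has $nUn^{-1}\subseteq N^{(k)}=U\,N^{(k-1)}$, whence $n\cdot(U\cdot W)=(nUn^{-1})\cdot(n\cdot W)=(nUn^{-1})\cdot W\subseteq U\,N^{(k-1)}\cdot W=U\cdot W$ because $W$ is $N^{(k-1)}$-invariant (the same computation applies to $W_\infty$). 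The content of the lemma is then the identity $[W]=\alpha\,[\tilde W]+g\,[W_\infty]$ in $H^{T'}_*(X)$, with $g$ a Graham-positive coefficient (a nonnegative multiple of a power of $\alpha$). Because $\tilde W$ and $W_\infty$ are $H_{k-1}U=H_k$-invariant, this identity advances the induction in one step, with no auxiliary induction on dimension.

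To prove the identity I would compactify the sweep. Put $\mathbb{P}^1=\overline U$, on which $T'$ acts fixing $0$ and $\infty$ with tangent weights $+\alpha$ and $-\alpha$, and set $\Gamma=\overline{\{(u,u\cdot y):u\in U,\ y\in W\}}\subseteq\mathbb{P}^1\times X$, a $T'$-invariant subvariety. The fiber of the projection $\Gamma\to\mathbb{P}^1$ over $0$ is $W$, the fiber over $\infty$ is $W_\infty$, and the pushforward of $[\Gamma]$ to $X$ recovers $[\tilde W]$ up to a positive multiplicity. Restricting $[\Gamma]$ to the two fixed fibers and comparing them through the localization sequence for $\mathbb{A}^1=\mathbb{P}^1\setminus\{\infty\}\subseteq\mathbb{P}^1$ expresses $[W]$ in terms of $[\tilde W]$ and the boundary class $[W_\infty]$; this is precisely the computation carried out in the model case $X=\mathbb{P}^1$, $W=\{0\}$, where one finds $[\{0\}]=\alpha[\mathbb{P}^1]+[\{\infty\}]$. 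Feeding the lemma into the inductive step and collecting terms finishes the argument: at each stage the higher-dimensional sweep carries the positive weight $\alpha_{i_k}$, the correction is a Graham-positive multiple of a smaller $H_k$-invariant class, and composing the $m$ steps yields $[Y]=\sum_i f_i[D_i]$ with each $D_i$ a $B'$-invariant subvariety and each $f_i$ a nonnegative-integer combination of monomials in $\alpha_1,\dots,\alpha_d$.

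The main obstacle I anticipate is the positivity in the sweeping identity, that is, showing the correction $[W_\infty]$ enters with a Graham-positive coefficient rather than a negative one. The sign is forced by the opposite tangent weights $\pm\alpha$ at the two $T'$-fixed points of $\mathbb{P}^1$: the point $0$, where $W$ sits, contributes the Euler factor $+\alpha$ to $[\tilde W]$, while the weight $-\alpha$ at $\infty$ is exactly what makes the coefficient of $[W_\infty]$ come out positive, as the $X=\mathbb{P}^1$ computation makes explicit. Making this robust when $W_\infty$ is reducible and when the sweep map $\Gamma\to\tilde W$ has degree larger than one requires checking that the intervening multiplicities and degrees are nonnegative integers, but these are honest intersection multiplicities, so positivity survives; the genuine subtlety is entirely in the sign analysis at the two fixed points.
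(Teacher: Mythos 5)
The paper does not actually prove this statement: it is quoted, with attribution, from Graham's paper \cite{Gra01} and used as a black box, so there is no in-paper argument to compare against. Measured against Graham's own proof, your proposal is essentially a faithful reconstruction of it: Graham likewise filters $N'$ by $B'$-normal, $T'$-stable subgroups with one-dimensional quotients carrying a single weight, reduces to a key lemma for $H=T'\ltimes \mathbb{G}_a$ of the form $[Y]=\chi\,[\overline{UY}]+\sum_i a_i[W_i]$ with invariant correction terms and nonnegative coefficients, and proves that lemma by compactifying the sweep over $\mathbb{P}^1=\overline{U}$ and comparing the fibers over $0$ and $\infty$, the sign being governed by the tangent weight $+\chi$ at the identity of $U$ exactly as in your model computation. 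Three points in your sketch would need tightening to reach Graham's level of rigor, none of them fatal. First, the correction term is a sum over the irreducible components of the fiber of $\Gamma\to\mathbb{P}^1$ over $\infty$; each such component has dimension exactly $\dim W$ (fibers of a dominant map from the $(\dim W+1)$-dimensional variety $\Gamma$), so each coefficient is a nonnegative \emph{integer} multiplicity rather than ``a nonnegative multiple of a power of $\alpha$'' --- your single $W_\infty$ with a Graham-positive scalar is loose on both counts, though you flag the reducibility issue yourself. Second, the coefficient of $[\tilde W]$ is $\delta\alpha$ with $\delta\geq 1$ the degree of $\Gamma\to\tilde W$, which you also note and which does not harm positivity. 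Third, the $H_k$-invariance of each boundary component is obtained most cleanly not by your limit-reparametrization computation (where conjugating $U$ by $N^{(k-1)}$ leaves $U$ and scrambles the parameter) but by observing that $\overline{UW}\setminus UW$ is a canonically defined closed subset stable under the connected group $H_k$, so its irreducible components are permuted and hence individually stable. With those repairs your argument is, in substance, Graham's.
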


We apply the theorem to the case that $X = G/B$ and $B'=B$,  and note that $\{X_w\}_{w\in W}$ are the set of $B$-invariant varieties, and thus must be the indicated varieties $D_i$. The weights of the $T$ action on $Lie(N')$ are the positive roots $\{\alpha_1, \dots, \alpha_d\}$ determined by our choice of Borel $B$, where $\dim N'=d,$ 
and naturally live in $H_T^*(pt)$.

The theorem is fundamentally geometric in nature, in that it says that the homology class associated with an invariant subvariety of $G/B$ can be expressed as a Graham positive linear combination of homology classes associated to Schubert varieties.

\begin{corollary}\label{cor:positivecomboschubert}
Let $Y$ be a $T$-invariant subvariety of $G/B$. Then in $H^T_*(G/B)$, 
$$
[Y]_T=\sum_{w\in W} f_w [X_w]_T
$$
where $X_w$ are Schubert varieties and $f_w$ are nonnegative linear combinations of monomials in $\alpha_1,\dots, \alpha_d$ for all $w\in W$.
\end{corollary}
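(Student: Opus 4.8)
The plan is to obtain this as a direct specialization of Graham's theorem (Theorem~\ref{thm:GrahamHom}), taking $B' = B$ to be the chosen Borel, $T' = T$ the maximal torus, and $X = G/B$. First I would verify that the hypotheses apply: $B$ is connected and solvable with unipotent radical $N$, and $T\subseteq B$ is a maximal torus with $B = TN$; the weights of $T$ acting on $\mathfrak n = Lie(N)$ are precisely the positive roots $\alpha_1,\dots,\alpha_d$ (with $d = \dim N = |\phi^+|$), which is exactly the list of weights appearing in the definition of Graham positivity. Since $Y$ is a $T$-invariant subvariety of $G/B$ by hypothesis, Theorem~\ref{thm:GrahamHom} immediately produces $B$-invariant subvarieties $D_1,\dots,D_k$ and an expansion $[Y]_T = \sum_i f_i [D_i]_T$ in $H^T_*(G/B)$, with each $f_i$ a nonnegative integer combination of monomials in $\alpha_1,\dots,\alpha_d$.

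The remaining---and only substantive---step is to identify the subvarieties $D_i$ as Schubert varieties. For this I would invoke the Bruhat decomposition $G/B = \bigsqcup_{w\in W} BwB/B$ from Example~\ref{ex:G/Bpaving}: every closed $B$-invariant subvariety of $G/B$ is a union of Bruhat cells, and an irreducible such subvariety is therefore the closure of a single cell, hence equal to $X_w = \overline{BwB/B}$ for some $w\in W$. Thus each $D_i$ coincides with some Schubert variety $X_{w_i}$, and after collecting the coefficients attached to equal Schubert varieties I would rewrite the expansion as $[Y]_T = \sum_{w\in W} f_w [X_w]_T$, setting $f_w = 0$ for those $w$ not appearing. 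Since each $f_w$ is a sum of the corresponding Graham-positive coefficients $f_i$, it remains a nonnegative combination of monomials in the positive roots, yielding the claim.

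The main (mild) obstacle is precisely this classification of irreducible $B$-invariant closed subvarieties: one must confirm that Graham's $D_i$, which are a priori only known to be $B$-invariant, really are Schubert varieties and nothing more exotic. This rests on the fact that the $B$-orbits on $G/B$ are exactly the Bruhat cells, each with a unique dense orbit in its closure. I would also keep track of the grading to ensure the expression is well-posed: the degree of $f_w$ must match twice the codimension difference between $X_w$ and $Y$, so that the sum is homogeneous in equivariant homology. With these two points checked, the corollary follows with no further computation.
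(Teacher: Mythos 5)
Your proof is correct and takes essentially the same route as the paper: the paper likewise obtains the corollary by applying Theorem~\ref{thm:GrahamHom} with $X=G/B$ and $B'=B$, noting that the weights of $T$ on $Lie(N)$ are the positive roots $\alpha_1,\dots,\alpha_d$ and that the $B$-invariant subvarieties $D_i$ must be the Schubert varieties $X_w$. Your extra care in classifying irreducible closed $B$-invariant subvarieties via the Bruhat decomposition is exactly the (implicit) justification the paper relies on.
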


\begin{example} Let $G=Gl(3, \mathbb C)$, $B$ upper-triangular matrices in $G$, and $T$ diagonal matrices of the form 
$\begin{pmatrix} 
r_1e^{ix_1} & 0 &0\\
0 & r_2e^{ix_2} &0\\
0& 0 &r_3e^{ix_3}
\end{pmatrix}$, with nonzero real numbers $r_1, r_2, r_3.$ The unipotent matrices $U$ are upper triangular invertible matrices; $T$ acts on its  Lie algebra $\mathfrak u$ with weights 
$$\alpha_1=x_1-x_2, \ \alpha_2= x_2-x_3,\  \alpha_3=x_1-x_3.$$ The $B$-invariant basis of the homology of $G/B$ is given by the six Schubert varieties: $X_{[123]} = (id)B/B$ is a single point;  $X_{[213]}$ and $X_{[132]}$ are both two-dimensional spheres; $X_{[312]}$ and $X_{[231]}$ are both Hirzebruch surfaces ($\mathbb C P^1$ bundles over $\mathbb C P^1$) and $X_{[321]}=G/B$. 

Consider the $T$-invariant subvariety $Y$ in $G/B$ isomorphic to $\mathbb C P^1$ and given by the closure of the cell 
$$
\begin{pmatrix} 
0 & * &1\\
1& 0 &0\\
0 & 1 & 0
\end{pmatrix}B.
$$
By Corollary~\ref{cor:positivecomboschubert}, the class $[Y]_T$ may be expressed as a sum of positive polynomials in $\alpha_1,\alpha_2$ times Schubert varieties.  There are multiply ways to find this relationship. Consider the dual class $\gamma$ to $[Y]_T$, which is described by the restrictions $\gamma\vert_{[213]} = -\alpha_1\alpha_2$, $\gamma\vert_{[231]} = -\alpha_1\alpha_2$, and $\gamma\vert_w=0$ otherwise. Let $PD[X_w]_T$ indicate the equivariant Poincar\'e dual class to $X_w$. A check on the restrictions to fixed points affirms that 
$$
\gamma = \alpha_1 PD[X_{[231]}]_T + PD[X_{[132]}]_T,
$$ as the restrictions to fixed points are the same for both the left and right sides of the equation. Taking the dual, we obtain the equation in equivariant homology:
\begin{equation}\label{examplepositivesum}
[Y]_T=\alpha_1 [X_{[231]}]_T+[X_{[132]}]_T.
\end{equation}
Observe that the coefficients are nonnegative in $\alpha_1, \alpha_2, \alpha_3$. 

Alternatively, using the dual basis of Schubert classes, the restriction to fixed points results in the equation
$$
\gamma = -\alpha_2 \sigma_{[213]} + \sigma_{[312]}.
$$ 
The equivariant pushforward of  $\gamma\sigma_w$ is the coefficient of $[X_w]_T$ in the expansion; its calculation may be done using the Atiyah-Bott Berline-Vergne formula (see \cite{AtiBot84}). For example,
$$
\int\limits_{G/B} \gamma \sigma_{231} = \int\limits_{G/B} -\alpha_2 \sigma_{[213]}\sigma_{[231]}  + \sigma_{[312]}\sigma_{[231]} = -\alpha_2 + \alpha_3 = \alpha_1.
$$ 
accounting for the coefficient of $\alpha_1$ in front of $X_{[231]}$ in \eqref{examplepositivesum}. The other coefficients are found similarly. 
\end{example}

The duality between equivariant homology and equivariant cohomology stems from the paving by affines. The following theorem is proved by Arabia in \cite{Ara89}, and restated in \cite{Gra01}. It requires only that the paving be $T$-invariant, rather than $B$-invariant. 
\begin{theorem}[Arabia]\label{thm:Arabia}
Suppose the $T$-variety $X$ has a paving by $T$-invariant affines $X^0_i$. Then
\begin{enumerate}
\item $H^T_*(X)$ is a free $H_T^*$-module with basis $\{[X_i]_T\}$.
\item Suppose in addition that $X$ is complete and that $H_T^*(X)$ is torsion-free. Then there exist classes $x_i$ (of degree dim $X_i$) in $H_T^*(X)$ which form a basis for $H_T^*(X)$ as an $H_T^*$-module, such that the bases $\{[X_i]_T\}$ and $\{x_i\}$  are dual in the sense that $\int_{X_i} x_j=\delta_{ij}$.
\end{enumerate}
\end{theorem}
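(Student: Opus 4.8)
The plan is to prove part (1) purely homologically, using the paving together with the long exact sequence in equivariant Borel--Moore homology, and then to bootstrap to part (2) by using completeness to define an integration pairing and the torsion-free hypothesis to establish freeness on the cohomology side. First I would record the excision/localization sequence: for a $T$-invariant closed subvariety $Z$ of a $T$-variety $X'$ with open complement $U = X'\setminus Z$, there is an exact sequence
\[
\cdots \to H^T_k(Z) \to H^T_k(X') \to H^T_k(U) \to H^T_{k-1}(Z) \to \cdots.
\]
The essential input is the computation for one $T$-invariant cell: if $X^0_i\cong\mathbb A^{d_i}$ carries a linear $T$-action, then $H^T_*(X^0_i)$ is a free $H_T^*(pt)$-module of rank one, concentrated in Borel--Moore degree $2d_i$ and generated by the fundamental class $[X^0_i]_T$.

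Next I would induct along the paving $Y_0\subset Y_1\subset\cdots\subset Y_n=X$, applying the sequence with $Z=Y_{i-1}$ and $U=Y_i\setminus Y_{i-1}=\bigsqcup$ affines. Since the cells contribute only even Borel--Moore degrees and, by induction, $H^T_*(Y_{i-1})$ is free and even, every connecting map $H^T_k(U)\to H^T_{k-1}(Y_{i-1})$ vanishes by parity; the sequence breaks into split short exact sequences of free $H_T^*(pt)$-modules. This simultaneously shows $H^T_*(Y_i)$ is free and that the fundamental classes $[X_j]_T$ of the closures $X_j=\overline{X^0_j}$ of the cells inside $Y_i$ form a basis. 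Taking $i=n$ gives part (1); note this step uses neither completeness nor torsion-freeness.

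For part (2), completeness of $X$ provides a proper pushforward $p_*\colon H^T_*(X)\to H^T_*(pt)=H_T^*(pt)$ along $p\colon X\to pt$, hence the $H_T^*(pt)$-bilinear pairing $\langle a,b\rangle = p_*(a\cap b)=\int_b a$ (as in Property~\ref{prop:coSchubTbasis}). Running the dual long exact sequence in equivariant cohomology of the pairs $(Y_i,Y_{i-1})$, whose relative groups are given by the equivariant Thom isomorphism $H_T^*(Y_i,Y_{i-1})\cong\bigoplus_i H_T^{*-2d_i}(pt)$ for the cells (each $T$-equivariantly an affine space contracting to a fixed point), and invoking the torsion-free hypothesis to force the same parity splitting, I would conclude that $H_T^*(X)$ is free with a basis $\{x'_i\}$ in bijection with the cells, where $x'_i$ is the image of the equivariant Thom class and is homogeneous of the degree demanded by the pairing (the $\dim X_i$ of the statement). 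Finally I would show the pairing is unimodular by triangularity: ordering cells by dimension (refining the closure order), support and dimension considerations force $\langle x'_j,[X_i]_T\rangle=0$ unless $\dim X_i\ge\dim X_j$, while the diagonal entry $\langle x'_i,[X_i]_T\rangle$ is computed on the open cell $X^0_i$, where $x'_i$ restricts to the Thom/Euler class and integrates to the constant $1$. The Gram matrix $M_{ij}=\langle x'_j,[X_i]_T\rangle$ is then unitriangular, hence invertible over $H_T^*(pt)$, and setting $x_j=\sum_k (M^{-1})_{kj}\,x'_k$ yields homogeneous classes with $\int_{X_i}x_j=\delta_{ij}$, the degrees being automatic from unitriangularity.

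The main obstacle I expect is the pair of intertwined points at the heart of the argument: first, justifying that every connecting homomorphism vanishes, which is where the torsion-free hypothesis is genuinely needed on the cohomology side so that no extension problem intervenes in the splitting; and second, the normalization $\langle x'_i,[X_i]_T\rangle=1$, i.e. identifying the restriction of $x'_i$ to the open cell with the equivariant Thom class and verifying the integral is exactly a unit rather than merely nonzero. Once these are secured, the remainder is formal homological algebra and linear algebra over $H_T^*(pt)$.
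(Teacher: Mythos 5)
A preliminary caveat: the paper does not prove Theorem~\ref{thm:Arabia} at all --- it quotes the statement from Arabia \cite{Ara89} as restated by Graham \cite{Gra01} --- so there is no in-paper argument to compare yours against, and I am judging the proposal on its own terms. Your part (1) is the standard argument and is essentially sound: the closed/open long exact sequence in equivariant Borel--Moore homology, the rank-one computation for a $T$-invariant affine cell (generated, not concentrated, in degree $2d_i$ --- a harmless slip since only evenness is used), and the parity splitting along the filtration $Y_0\subset\cdots\subset Y_n$ give freeness with basis the classes of the cell closures, using neither completeness nor torsion-freeness.

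Part (2) has genuine gaps. First, your argument never actually uses the torsion-freeness hypothesis at an identifiable step: if the identification $H_T^*(Y_i,Y_{i-1})\cong\bigoplus_j H_T^{*-2d_j}(pt)$ held as you state it, the same parity argument as in part (1) would split the cohomology sequences and yield freeness of $H_T^*(X)$ with no hypothesis at all, which should make you suspicious of that identification. The cells are $T$-invariant affine subvarieties of a possibly singular stratum, not linearly embedded equivariant vector bundles, so there is no off-the-shelf equivariant Thom isomorphism; what the paving gives directly is the compactly supported cohomology of the open stratum, and passing from that to $H_T^*(Y_i,Y_{i-1})$ for the Borel construction (including control of ${\lim}^1$ terms over finite approximations to $ET$) is precisely where the hypotheses earn their keep. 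The standard proofs take a different route here: having part (1), one considers the duality map $H_T^*(X)\to \mathrm{Hom}_{H_T^*(pt)}\bigl(H_*^T(X),H_T^*(pt)\bigr)$ induced by $\langle\cdot,\cdot\rangle$ and shows it is an isomorphism by combining the non-equivariant statement (freeness of $H_*(X)$ plus universal coefficients makes the ordinary pairing perfect) with equivariant formality; torsion-freeness is what allows the non-equivariant dual basis to be lifted. Second, the normalization $\langle x_i',[X_i]_T\rangle=1$ is asserted rather than proved: $X_i=\overline{X_i^0}$ is in general singular, so the integral means capping with a Borel--Moore fundamental class and pushing forward, and one must verify that it localizes to the open cell and returns exactly $1$ rather than some multiplicity; likewise the vanishing of $\langle x_j',[X_i]_T\rangle$ for distinct cells of equal dimension requires the support argument to be made precise. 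You flag both issues yourself, but as written they are the actual content of the theorem rather than loose ends to be tidied later.
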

Note that Theorem~\ref{thm:Arabia} implies that there is a geometric basis for the cohomology when $X$ has a paving by affines, but it does not guarantee that the multiplication is Graham positive. In particular, it does not imply geometric positivity for varieties with affine pavings. 

Applied to $X=G/B$ and affines $X_w^0 = BwB/B$, Theorem~\ref{thm:Arabia}(1) implies Property~\ref{prop:schubTbasis}, whereas Theorem~\ref{thm:GrahamHom} only implies that $\{[X_w]_T\ |\ w\in W\}$ form a spanning set of $H^T_*(G/B)$. Furthermore, the duality of  Theorem~\ref{thm:Arabia}(2) implies the existence of the basis $\{\sigma_w\}_{w\in W}$ of $H_T^*(G/B)$ stated in Property~\ref{prop:cohombasis}, together with the pairing described in Property~\ref{prop:coSchubTbasis}.

We put these statements into two easy-to-reference corollaries:
\begin{corollary} The set $\{[X_w]_T\}_{w\in W}$ is a basis for $H^T_*(G/B)$ as a (free) module over $H_T^*(pt)$.
\end{corollary}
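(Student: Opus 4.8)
The plan is to obtain the corollary as an immediate application of Theorem~\ref{thm:Arabia}(1) to the variety $X = G/B$, equipped with the affine paving already recorded in Example~\ref{ex:G/Bpaving}. All of the freeness and the basis property is packaged inside Arabia's theorem; what remains is only to check that the specific Bruhat paving of $G/B$ satisfies its hypotheses, and then to identify the resulting basis with $\{[X_w]_T\}_{w\in W}$.

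First I would recall that $T$ acts on $G/B$ by left multiplication, so that $G/B$ is a $T$-variety, and that Example~\ref{ex:G/Bpaving} exhibits the paving
$$
G/B = \bigsqcup_{w\in W} X^0_w, \qquad X^0_w = BwB/B \cong \mathbb{A}^{\ell(w)},
$$
by affine spaces. The one hypothesis of Theorem~\ref{thm:Arabia} that is not already explicit in Example~\ref{ex:G/Bpaving} is the $T$-invariance of the pieces. I would verify this by observing that each Bruhat cell $BwB/B$ is stable under the left action of $B$, and since $T \subseteq B$ it is a fortiori stable under $T$. Thus the $X^0_w$ form a paving of $G/B$ by $T$-invariant affines, precisely the situation to which Theorem~\ref{thm:Arabia}(1) applies.

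With the hypotheses in place, Theorem~\ref{thm:Arabia}(1) asserts that $H^T_*(G/B)$ is a free $H_T^*(pt)$-module with basis given by the classes of the closures of the paving cells. Since $\overline{X^0_w} = X_w$ is the Schubert variety indexed by $w$ (Example~\ref{ex:G/Bpaving}), this basis is exactly $\{[X_w]_T\}_{w\in W}$, which is the assertion of the corollary.

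There is no substantive obstacle here: the topological content is entirely absorbed into Theorem~\ref{thm:Arabia}, and the proof reduces to matching its abstract statement against the concrete Bruhat paving. The only point requiring a moment's care is the $T$-invariance of the cells, which follows from the inclusion $T \subseteq B$; I would emphasize that the $B$-invariance of these cells is in fact stronger than what Arabia's theorem needs, since that theorem requires only invariance under the torus. In this sense the corollary is a clean re-derivation of Property~\ref{prop:schubTbasis} directly from the affine paving, rather than an independent result.
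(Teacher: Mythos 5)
Your proposal is correct and follows exactly the paper's route: the corollary is stated immediately after the paper applies Theorem~\ref{thm:Arabia}(1) to $X=G/B$ with the Bruhat paving $X^0_w=BwB/B$ of Example~\ref{ex:G/Bpaving}, identifying the closures with the Schubert varieties $X_w$. Your added remark that $T$-invariance of the cells follows from their $B$-invariance together with $T\subseteq B$ is a point the paper leaves implicit, but it is the right justification and introduces no divergence from the paper's argument.
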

\begin{corollary} The pairing $\int_{X_u}\sigma_v=\delta_{uv}$ defines a dual basis $\{\sigma_w\}_{w\in W}$ for $H_T^*(G/B)$ as a module over $H_T^*(pt)$. 
\end{corollary}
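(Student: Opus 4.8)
The plan is to obtain the statement as an immediate consequence of Theorem~\ref{thm:Arabia}(2), applied to $X=G/B$ with the $T$-invariant affine paving by Schubert cells $X_w^0=BwB/B$, $w\in W$, exhibited in Example~\ref{ex:G/Bpaving}. The two things the corollary asserts are (i) that classes $\sigma_w$ satisfying $\int_{X_u}\sigma_v=\delta_{uv}$ exist, and (ii) that they form an $H_T^*(pt)$-basis of $H_T^*(G/B)$; both are packaged together in the conclusion of that theorem, so the work reduces to checking its hypotheses and then matching notation.

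First I would verify the three hypotheses of Theorem~\ref{thm:Arabia}(2). The paving of Example~\ref{ex:G/Bpaving} is by $T$-invariant affines, since each Schubert cell $BwB/B$ is $B$-invariant and hence $T$-invariant, and is isomorphic to $\mathbb A^{\ell(w)}$. The space $G/B$ is complete, being projective. Finally, $H_T^*(G/B)$ is torsion-free: this is immediate from the property, recorded earlier, that $H_T^*(G/B)$ is a free module over $H_T^*(pt)$, since free modules have no torsion.

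With the hypotheses confirmed, Theorem~\ref{thm:Arabia}(2) furnishes classes in $H_T^*(G/B)$, one for each cell and hence one for each $w\in W$, that form an $H_T^*(pt)$-basis and are dual to the homology basis $\{[X_w]_T\}$ under the integration pairing, so that the class indexed by $v$ satisfies $\int_{X_u}(\,\cdot\,)=\delta_{uv}$. Since the preceding corollary guarantees that $\{[X_w]_T\}_{w\in W}$ is a genuine basis of $H_*^T(G/B)$, this duality makes the pairing $\langle a,b\rangle=\int_b a$ perfect, and the conditions $\int_{X_u}\sigma_v=\delta_{uv}$ therefore determine each $\sigma_v$ uniquely. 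The classes supplied by Arabia's construction are thus exactly the $\sigma_w$, which establishes both their existence and the basis property. I do not anticipate a genuine obstacle, since the entire content is carried by Theorem~\ref{thm:Arabia}(2); the only point deserving care is the torsion-free hypothesis, and even that is handed to us for free by the freeness of $H_T^*(G/B)$ as an $H_T^*(pt)$-module.
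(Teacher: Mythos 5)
Your proposal is correct and follows exactly the route the paper takes: the corollary is obtained by applying Theorem~\ref{thm:Arabia}(2) to $X=G/B$ with the $T$-invariant affine paving by Schubert cells of Example~\ref{ex:G/Bpaving}. Your explicit verification of the hypotheses (completeness of $G/B$ and torsion-freeness via freeness of $H_T^*(G/B)$ over $H_T^*(pt)$) is a welcome elaboration of details the paper leaves implicit, but it is the same argument.
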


The cohomology classes 
 $\{\sigma_w\}_{w\in W}$ are called {\em Schubert classes}.
We now show that the positivity in homology, together with Property~\ref{prop:coSchubTbasis}, implies positivity in {\em cohomology}, following Graham's original argument in \cite{Gra01}. In particular, if $c_{uv}^w$ are defined by the equations $\sigma_u\sigma_v=\sum c_{uv}^w\sigma_w$, then as with the nonequivariant case,
\begin{align*}
\langle \sigma_u\sigma_v,[X_w]_T\rangle &=\langle\sum_{w'} c_{uv}^{w'}\sigma_{w'},[X_w]_T\rangle \\
&=\sum_{w'} c_{uv}^{w'}\langle\sigma_{w'},[X_w]_T\rangle= c_{uv}^w,
\end{align*}
by duality. On the other hand, if $\delta: X\hookrightarrow X\times X$ is the diagonal map, with projection maps $pr_1, pr_2: X\times X\longrightarrow X$ onto the first and second factors, respectively, then
\begin{align*}
\langle \sigma_u\sigma_v,[X_w]_T\rangle &=\langle \delta^*\left(pr_1^*\sigma_u\right) \delta^*\left(pr_2^*\sigma_v\right),[X_w]_T\rangle\\
&=\langle \delta^*\left(pr_1^*\sigma_u\cdot pr_2^*\sigma_v\right),[X_w]_T\rangle\\
&=\langle pr_1^*\sigma_u\cdot pr_2^*\sigma_v,\delta_*\left([X_w]_T\right)\rangle
 \end{align*}
 by the push-pull formula. 
 We apply Graham's theorem to the homology class $\delta_*\left([X_w]_T\right)\in H^T_*(G/B\times G/B)$. Let $X= G/B\times G/B$, with $B'=T\cdot N\times N$, and observe that the $B'$-invariant subvarieties of $G/B\times G/B$ are $X_u\times X_v$ for $u, v\in W$. The homology classes $\{[X_u\times X_v]\}_{u,v\in W}$ form a basis for the homology $H^T_*(G/B\times G/B)$ as a module over $H_T^*(pt)$. Therefore the equation
 \begin{equation}\label{hom:diagonal}
 \delta_*\left([X_w]_T\right) = \sum_{u',v'} a^{u',v'}_w [X_{u'}\times X_{v'}]
 \end{equation}
 uniquely determines the coefficients $a^{u',v'}_w\in H_T^*(pt)$. Thus 
 \begin{align*}
\langle \sigma_u\sigma_v,[X_w]_T\rangle &=\langle pr_1^*\sigma_u\cdot pr_2^*\sigma_v,\delta_*\left([X_w]_T\right)\rangle\\
&=\sum_{u',v'} a^{u',v'}_w\langle pr_1^*\sigma_u\cdot pr_2^*\sigma_v, [X_{u'}\times X_{v'}]\rangle\\
&=\sum_{u',v'} a^{u',v'}_w \langle \sigma_u, [X_{u'}]_T\rangle\cdot \langle \sigma_v, [X_{v'}]_T\rangle
 \end{align*} 
 where the last equality follows from the K\"unneth theorem and a careful analysis of the fibers of the maps (see \cite{Gra01} for details). Finally, we observe that each summand is $1$ if and only if both $u=u'$ and $v=v'$, and is $0$ otherwise, 
 so that
 $$
 c_{uv}^w = \langle \sigma_u\sigma_v,[X_w]_T\rangle =a_w^{u,v}.
 $$
 Since $\delta_*\left([X_w]_T\right) =[\delta(X_w)]_T$, the left hand side of \eqref{hom:diagonal} represents a $T$-invariant subvariety of $X\times X$. By Theorem~\ref{thm:GrahamHom}, each coefficient $a^{u,v}_w$ 
is a linear combination of monomials in the weights of the action of $T$ on $Lie(N\times N)$. The list of distinct weights is the same as the weights of the action on $Lie(N)$, mainly the positive roots $\alpha_1,\dots, \alpha_d$, so $a^{u,v}_w$ are Graham positive. We state the theorem from \cite{Gra01} in full for completeness.
\begin{theorem}[Graham]\label{thm:GrahamCohom}
Let $B$ be a connected solvable group with unipotent radical $N$ and Levi decomposition $B = TN$. Let $\alpha_1, \dots, \alpha_d$ denote the weights of the $T$-action on $Lie(N)$. Let $X$  be a complete $B$-variety on which $N$ acts with finitely many orbits $X_1^0,\dots, X_n^0$. These are a paving of $X$ by $B$-stable affines; let $X_1,\dots, X_n$ denote the closures, so $\{[X_1]_T,\dots,[X_n]_T\}$ is a basis for $H^T_* (X)$. Let $\{x_1, \dots, x_n\}$ denote the dual basis of $H_T^*(X)$. Write
$x_i x_j = \sum_k a_{ij}^k x_k$ 
with $a_{ij}^k \in H_T^*(pt)$. Then each $a_{ij}^k $can be written as a sum of monomials $\alpha_1^{i_1}\cdots \alpha_d^{i_d}$
with nonnegative integer coefficients.
\end{theorem}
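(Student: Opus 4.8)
The plan is to deduce the cohomological positivity statement directly from the homological one, \Thmref{GrahamHom}, by transporting the cup product onto the diagonal and pushing forward. This is exactly the argument carried out above for $X = G/B$, so the strategy is to repeat it verbatim with the general $N$-orbit paving $\{X_i^0\}$ in place of the Schubert cells, checking that each ingredient survives the generalization.

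First I would extract the structure constants as pairings. Since the bases $\{[X_k]_T\}$ and $\{x_k\}$ are dual in the sense $\int_{X_k} x_l = \delta_{kl}$, pairing $x_i x_j = \sum_k a_{ij}^k x_k$ against $[X_k]_T$ yields
$$
a_{ij}^k = \langle x_i x_j, [X_k]_T\rangle .
$$
Next, writing $\delta : X \hookrightarrow X\times X$ for the diagonal and $pr_1, pr_2$ for the projections, I would use $x_i x_j = \delta^*(pr_1^* x_i \cdot pr_2^* x_j)$ together with the push-pull formula to rewrite this as
$$
a_{ij}^k = \langle pr_1^* x_i \cdot pr_2^* x_j,\ \delta_*[X_k]_T\rangle .
$$

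Then I would set up the product. The variety $X\times X$ is complete, carries the action of $B' = T\cdot(N\times N)$ with unipotent radical $N' = N\times N$ and diagonal torus $T$, and is paved by the $B'$-stable affines $X_i^0\times X_j^0$; by \Thmref{Arabia} applied to this product paving, together with the K\"unneth theorem, $\{[X_i\times X_j]_T\}$ is an $H_T^*(pt)$-basis of $H_*^T(X\times X)$ dual to the external products $pr_1^* x_i \cdot pr_2^* x_j$. Expanding
$$
\delta_*[X_k]_T = \sum_{i',j'} b_k^{i',j'}\, [X_{i'}\times X_{j'}]_T
$$
and observing that $\delta_*[X_k]_T = [\delta(X_k)]_T$ is the class of a genuine $T$-invariant subvariety of $X\times X$ (because $T$ acts diagonally and $X_k$ is $T$-invariant), \Thmref{GrahamHom} applies: each $b_k^{i',j'}$ is a nonnegative integer combination of monomials in the weights of $T$ on $Lie(N')$. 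As a set these weights are precisely the positive roots $\alpha_1,\dots,\alpha_d$, so the $b_k^{i',j'}$ are Graham positive. Finally, the K\"unneth factorization of the pairing gives
$$
\langle pr_1^* x_i \cdot pr_2^* x_j,\ [X_{i'}\times X_{j'}]_T\rangle = \langle x_i, [X_{i'}]_T\rangle\,\langle x_j, [X_{j'}]_T\rangle = \delta_{ii'}\delta_{jj'},
$$
whence $a_{ij}^k = b_k^{i,j}$, which is Graham positive as required.

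The main obstacle is not the positivity input — that is supplied entirely by \Thmref{GrahamHom} applied to the pushed-forward diagonal class — but the bookkeeping that legitimizes the reduction. I expect the delicate points to be: verifying that $X\times X$ meets the hypotheses of \Thmref{Arabia} (completeness is automatic, but torsion-freeness of $H_T^*(X\times X)$ must be inherited from $X$ through K\"unneth), and justifying the K\"unneth factorization of the pairing, which requires a careful analysis of the fibers of $pr_1,pr_2$ restricted to $X_{i'}\times X_{j'}$; this is precisely the step the author defers to \cite{Gra01}. One must also confirm that passing from $N$ to $N' = N\times N$ does not enlarge the relevant weight set — each $\alpha_\ell$ merely occurs with doubled multiplicity — so that the resulting positivity is genuinely with respect to the original positive roots and not some coarser or finer collection.
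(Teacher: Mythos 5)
Your proposal follows the paper's argument essentially verbatim: structure constants as pairings with $[X_k]_T$, transport to the diagonal via the push-pull formula, expansion of $\delta_*[X_k]_T$ in the product basis of $H_*^T(X\times X)$, an application of Theorem~\ref{thm:GrahamHom} to the $T$-invariant subvariety $\delta(X_k)$ with $B'=T\cdot(N\times N)$, and the observation that the weights on $Lie(N\times N)$ coincide as a set with $\alpha_1,\dots,\alpha_d$. The paper carries this out explicitly for $X=G/B$ and defers the same K\"unneth/fiber analysis to \cite{Gra01} that you flag as the delicate step, so your route and the paper's are the same.
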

For ease of reference, we state as a corollary the positivity for the equivariant cohomology of $G/B$.
\begin{corollary}\label{cor:G/Bstructureconsts} Let $\{\sigma_w\}_{w\in W}$ be the basis of $H_T^*(G/B)$ consisting of Schubert classes. Then 
$$
 \sigma_u \sigma_v = \sum_{w\in W} c_{uv}^w \sigma_w
$$
defines structure constants $ c_{uv}^w $ that are Graham positive.
\end{corollary}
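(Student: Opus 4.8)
The plan is to obtain Corollary~\ref{cor:G/Bstructureconsts} as a direct specialization of Theorem~\ref{thm:GrahamCohom} to the variety $X = G/B$, so that essentially all of the work reduces to checking that $G/B$ meets that theorem's hypotheses and that the abstract dual basis it produces coincides with the Schubert classes $\{\sigma_w\}_{w\in W}$.

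First I would verify the hypotheses of Theorem~\ref{thm:GrahamCohom} with $B$ the chosen Borel, $T$ its maximal torus, and $N$ its unipotent radical. The flag variety $G/B$ is complete and carries the left $B$-action. The key point is that $N$ acts with only finitely many orbits: since $T\subseteq B$ fixes each point $wB$ and normalizes $N$, one has $TwB/B = wB/B$, and hence $BwB/B = NwB/B$. Thus the $N$-orbits coincide with the Schubert cells $X_w^0 = BwB/B$ indexed by $w\in W$, which are the $B$-stable affine cells of the paving in Example~\ref{ex:G/Bpaving}, with closures the Schubert varieties $X_w$. Theorem~\ref{thm:GrahamCohom} then guarantees that $\{[X_w]_T\}_{w\in W}$ is an $H_T^*(pt)$-basis of $H^T_*(G/B)$ and produces a dual basis $\{x_w\}_{w\in W}$ of $H_T^*(G/B)$, characterized by $\int_{X_u} x_v = \delta_{uv}$.

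The one genuine identification to carry out is that this dual basis is exactly the Schubert class basis. By Property~\ref{prop:coSchubTbasis}, the Schubert classes satisfy $\langle \sigma_v, X_u\rangle = \int_{X_u}\sigma_v = \delta_{uv}$, which is precisely the defining property of the $x_w$; since a dual basis is unique, $x_w = \sigma_w$ for every $w$. Writing $\sigma_u\sigma_v = \sum_w c_{uv}^w \sigma_w$ therefore matches the expansion $x_i x_j = \sum_k a_{ij}^k x_k$ of Theorem~\ref{thm:GrahamCohom}, and the theorem's conclusion says each $a_{ij}^k$, hence each $c_{uv}^w$, is a nonnegative-integer combination of monomials in the positive roots $\alpha_1,\dots,\alpha_d$. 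This is exactly Graham positivity in the sense of Definition~\ref{def:positivity}.

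I do not expect any serious obstacle here: the corollary is a restatement of Theorem~\ref{thm:GrahamCohom} in its single most important example, and the substantive positivity input has already been unpacked in the diagonal argument preceding the theorem, namely applying Graham's homology theorem (Theorem~\ref{thm:GrahamHom}) to $\delta_*([X_w]_T)\in H^T_*(G/B\times G/B)$, whose $B'$-invariant subvarieties are the products $X_u\times X_v$, and reading off $c_{uv}^w = a_w^{u,v}$. The only points requiring care are the finiteness of the $N$-orbits and the uniqueness-of-dual-basis identification above, both of which are routine given the properties collected in Section~\ref{se:ecohomologyG/B}.
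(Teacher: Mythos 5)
Your proposal is correct and follows the same route as the paper: the corollary is stated there precisely as the specialization of Theorem~\ref{thm:GrahamCohom} to $X=G/B$ (with the substantive positivity input being the diagonal argument via Theorem~\ref{thm:GrahamHom} that the paper works through just beforehand), and your verification that the $N$-orbits are the Schubert cells and that the dual basis is $\{\sigma_w\}$ by Property~\ref{prop:coSchubTbasis} and uniqueness is exactly the intended identification.
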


\begin{example}
In type A, the product $\sigma_{[213]}^2= \alpha_1 \sigma_{[213]}+ \sigma_{[312]}$. To check the equality, one may evaluate both sides at each fixed point, and use Property~\ref{prop:localizationincl}. 
\end{example}

\begin{remark}[Natuality]
For any subtorus $S\subset T$, there is a natural projection of the dual to the Lie algebras, $\mathfrak{t}^*\rightarrow \mathfrak{s}^*$. This induces a map $\pi: H_T^*(G/B)\rightarrow H_S^*(G/B)$, that retains some positivity properties with respect to the restricted weights.  In particular, the set $\{ \pi(\sigma_w)\}_{w\in W}$ of $\pi(\sigma_w)\in H_S^*(G/B)$ form a basis of $H_S^*(G/B)$ over $H_S^*(pt)$. The product $\pi(\sigma_u)\pi(\sigma_v)$ results in structure constants that are themselves Graham positive in the weights $\pi(\alpha_1),\dots, \pi(\alpha_d)$.  These coefficients are, by abuse of notation, also called $c_{uv}^w$. 

When $S$ is the trivial torus, $\pi(\alpha_i)=0$ for all $i$, and the coefficients $c_{uv}^w$ are the ordinary structure constants defined by \eqref{eq:ordstructureconstants}: Graham positivity reduces to the statement that $c_{uv}^w$ are nonnegative numbers.
\end{remark}

\subsection{Localization}

Another notion of positivity is obtained by examining the localization map mentioned in Property~\ref{prop:localizationincl}. For 
any $w\in W$, let $\iota_w^*$ denote the composition
$$
\iota_w^*: H_T^*(G/B) \longrightarrow H_T^*((G/B)^T)= \bigoplus_{v\in W}H_T^*(vB) \longrightarrow H_T^*(wB).
$$
The image $\iota_w^*(\sigma_v)$ of a Schubert class $\sigma_v$ is called the {\em restriction of $\sigma_v$ to $w$}, or the {\em localization of $\sigma_v$ at $w$.} Observe that its value is in the equivariant cohomology of the point $wB$, and hence is a polynomial. 

\begin{theorem}[Billey \cite{Bil99}, AJS \cite{AndJanSoe94}] For all $v, w\in W$
The localization $\iota^*_w(\sigma_v)$ is Graham positive.
\end{theorem}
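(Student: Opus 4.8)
The plan is to reduce Graham positivity of the localization to a single explicit combinatorial identity---Billey's formula---in which every summand is visibly a product of positive roots. First I would fix a reduced word $w = s_{i_1}s_{i_2}\cdots s_{i_\ell}$, where $s_i$ denotes the reflection in the simple root $\alpha_i$ and $\ell = \ell(w)$, and for $1 \le j \le \ell$ set $\beta_j := s_{i_1}s_{i_2}\cdots s_{i_{j-1}}(\alpha_{i_j})$. Billey's formula then asserts
$$
\iota_w^*(\sigma_v) = \sum_{\mathbf{j}} \prod_{a=1}^{\ell(v)} \beta_{j_a},
$$
where the sum ranges over all increasing sequences $\mathbf{j} = (j_1 < \cdots < j_{\ell(v)})$ for which $s_{i_{j_1}}\cdots s_{i_{j_{\ell(v)}}}$ is a reduced word for $v$. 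Granting this identity, the theorem is immediate once each root $\beta_j$ is shown to be positive.

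The positivity of the $\beta_j$ is the easy but essential ingredient. I would record the standard fact that, for a reduced word, the sequence $\beta_1,\ldots,\beta_\ell$ is exactly the inversion set of $w$, namely the positive roots $\alpha\in\phi^+$ with $w^{-1}\alpha$ negative; in particular each $\beta_j\in\phi^+$. The one-line reason is that $\beta_j$ is positive if and only if $s_{i_1}\cdots s_{i_{j-1}}s_{i_j}$ is longer than $s_{i_1}\cdots s_{i_{j-1}}$, which holds precisely because the chosen word is reduced. Consequently every summand in Billey's formula is a product of $\ell(v)$ elements of $\phi^+$ with coefficient $+1$, so $\iota_w^*(\sigma_v)$ is a nonnegative integer combination of monomials in the positive roots---exactly Graham positivity in the sense of Definition~\ref{def:positivity}.

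The principal obstacle is establishing Billey's formula itself. I would prove it by induction on $\ell(w)$, using the image of the localization map of Property~\ref{prop:localizationincl}---which by GKM theory is the subring of $\bigoplus_{w\in W}H_T^*(wB)$ cut out by edge relations---together with the defining properties of the Schubert basis: $\iota_w^*(\sigma_v)=0$ unless $v\le w$, and $\iota_v^*(\sigma_v)$ equals the product of the positive roots in the inversion set of $v$. Appending the last letter gives a length-additive factorization $w=w's_{i_\ell}$ with $w'=s_{i_1}\cdots s_{i_{\ell-1}}$, and the reduced subwords spelling $v$ split according to whether position $\ell$ is used, yielding the recursion
$$
\iota_w^*(\sigma_v) = \iota_{w'}^*(\sigma_v) + \beta_\ell\cdot \iota_{w'}^*(\sigma_{v s_{i_\ell}}),
$$
valid when $vs_{i_\ell}<v$, with the second term absent otherwise; here $\beta_\ell = w'(\alpha_{i_\ell})$ is positive exactly because $\ell(w)=\ell(w')+1$. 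I would verify this recursion by checking that both sides obey the GKM edge relations and agree on the relevant fixed points, then invoke uniqueness of the Schubert basis (triangular with nonzero diagonal entries) to conclude that the class built by the formula equals $\sigma_v$.

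Finally, I would note an alternative that sidesteps the explicit formula: prove Graham positivity directly by the same induction on $\ell(w)$. Since $\beta_\ell$ is a positive root and, by the inductive hypothesis, both $\iota_{w'}^*(\sigma_v)$ and $\iota_{w'}^*(\sigma_{vs_{i_\ell}})$ are already Graham positive, the displayed recursion exhibits $\iota_w^*(\sigma_v)$ as the sum of a Graham-positive class and a positive root times a Graham-positive class, hence Graham positive; the base case $w=e$ gives $\iota_e^*(\sigma_v)=\delta_{v,e}$. Either route, the crux is the recursion with a manifestly positive coefficient $\beta_\ell$, whose positivity traces back entirely to the length-additivity $\ell(w)=\ell(w')+1$.
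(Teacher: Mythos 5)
Your argument is correct and is essentially the proof contained in the cited sources: the paper itself states this theorem with references to Billey and Andersen--Jantzen--Soergel and offers no proof of its own, and what you have written is precisely the standard route those references take. Billey's formula expresses $\iota_w^*(\sigma_v)$ as a sum over reduced subwords of a fixed reduced word for $w$ of products of the roots $\beta_j=s_{i_1}\cdots s_{i_{j-1}}(\alpha_{i_j})$, each of which is positive exactly because the word is reduced (they enumerate the inversion set of $w$), so every summand is a monomial in the positive roots with coefficient $1$ and Graham positivity is immediate. Your subword-splitting recursion
$\iota_w^*(\sigma_v)=\iota_{w'}^*(\sigma_v)+\beta_\ell\,\iota_{w'}^*(\sigma_{vs_{i_\ell}})$ (second term present only when $vs_{i_\ell}<v$) is the correct inductive engine, and your observation that one can run the induction directly on Graham positivity without ever assembling the closed formula is a nice economy. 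The only caveat is that verifying this recursion via the GKM description is where all the real work lives --- that step is the content of Billey's proof rather than a routine check --- but the logical structure you describe is sound and complete.
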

\begin{example}
Consider the  class $\sigma_{[312]}$, which is nonzero on fixed points $[312]B$ and $[321]B$ and zero elsewhere. The restriction to either fixed point is $\alpha_2\alpha_3$, which is clearly Graham positive. 
\end{example}

\section{Positivity for Peterson varieties}\label{se:positivityPeterson}

We show that Peterson varieties enjoy positivity properties similar to $G/B$, and in particular whether there's a geometric basis for its homology that translates into a positive product formula in cohomology. We noted in Section~\ref{se:defs} that $\Pet$ supports a $\mathbb C^*$ action by a one-dimensional subgroup $S\subset T$. The Peterson variety $\Pet$ does have a paving by $S$-invariant affine cells, which allows us to use Thereom~\ref{thm:Arabia} at liberty. We are limited in our application of Theorem~\ref{thm:GrahamCohom}, however as $\Pet$ is not $B$-invariant. Instead we use the $S$-invariant inclusion map $\Pet\hookrightarrow G/B$ to leverage positivity in $G/B$ for $\Pet$.

We begin by describing an affine paving for $\Pet$.
\begin{definition} Let $K\subset \Delta$ be a subset of simple roots. Let $W_K$ be the Weyl group associated to those simple roots, and $w_K\in W_K$ the longest word in $W_K$. The {\em Peterson Schubert cells} (or simply {\em Peterson cells})
are given by
$$
\Pet^\circ_K :=\Pet\cap Bw_K B/B.
$$
Define the {\em Peterson Schubert varieties} by $\Pet_K=\overline{\Pet_K^\circ}$.
\end{definition}

B{\u a}libanu proved that $\Pet^\circ_K\subset X_{w_K}$ is an affine space of dimension $|K|$ (see \cite{Bal17} and, for details, \cite{GolMihSin21}); $\Pet_K$ is consequently an irreducible subvariety of $X_{w_K}$. 

The Schubert cells restricted to $\Pet$ result in an $S$-stable affine paving on each $\Pet_K$:
$$
\Pet_K= \bigsqcup_{J\subseteq K} \Pet_J^\circ.
$$
By Theorem~\ref{thm:Arabia}(1)
$H^S_*(\Pet)$ is a free module over $H_S^*(pt)$ with basis given by $\{[P_K]\}_{K\subseteq \Delta}.$ 
Furthermore, while $\Pet$ is not $B$ invariant, it is an $S$-invariant subvariety of $G/B$ where we may consider $G/B$ as a $B'$-stable variety for some Borel $B'$ containing $S$ as a maximal torus. Thus we may apply Theorem~\ref{thm:GrahamHom}, with $X=G/B$, $Y=\Pet$ and $T'=S$ to obtain the following corollary. Recall that $\pi(\alpha_i)=t$ for all $i=1, \dots, d$ since $S$ stabilizes the principle nilpotent. 
\begin{corollary}\label{co:graham} Let $K\subseteq \Delta$ be a subset of the simple roots. Consider the subvariety $P_K\hookrightarrow G/B$ and its expansion in terms of Schubert varieties:
\begin{equation}\label{eq:PetExp}
[P_K]_S = \sum_{w\in W} c_K^w [X_w]_S.
\end{equation}
The coefficients $c_K^w \in H^*_S(pt)$ are polynomials in $t$ with nonnegative coefficients.
\end{corollary}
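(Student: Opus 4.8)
The plan is to deduce the corollary directly from Graham's homology positivity (Theorem~\ref{thm:GrahamHom}), applied not to the full Borel $B$ but to the solvable subgroup $B' = SN$, where $N$ is the unipotent radical of $B$ and $S\subset T$ is the one-dimensional torus stabilizing the principal nilpotent. First I would check that $P_K$ is a legitimate input with $T'=S$: since $\Pet$ is $S$-invariant and the Schubert cell $Bw_KB/B$ is $B$-invariant, hence $S$-invariant, the cell $\Pet^\circ_K=\Pet\cap Bw_KB/B$ and its closure $P_K$ are $S$-invariant subvarieties of $G/B$, so $[P_K]_S\in H^S_*(G/B)$ is defined. Taking $X=G/B$, $Y=P_K$, $T'=S$ and $N'=N$, Theorem~\ref{thm:GrahamHom} produces an expansion $[P_K]_S=\sum_i f_i[D_i]_S$ in which the $D_i$ are $B'$-invariant subvarieties and each $f_i$ is a nonnegative integer combination of monomials in the weights $\pi(\alpha_1),\dots,\pi(\alpha_d)$ of $S$ acting on $\mathfrak n=Lie(N)$.

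Two points then have to be matched to the statement of the corollary: that the invariant subvarieties $D_i$ are exactly the Schubert varieties $X_w$, and that the coefficients are polynomials in $t$. For the first, I would observe that $B=TN$ and $N$ have the same orbits on $G/B$: since $T$ fixes each coset $wB$ and normalizes $N$, one has $BwB/B=NwB/B$, so the Bruhat cells are precisely the $N$-orbits. Consequently every irreducible closed $N$-invariant subvariety is the closure of an $N$-orbit, i.e. a Schubert variety, and adjoining $S$-invariance changes nothing because each $X_w$ is already $T$-invariant. Hence the $B'=SN$-invariant subvarieties available to Graham's theorem are exactly the $X_w$, and the expansion reads $[P_K]_S=\sum_w c_K^w[X_w]_S$; since $\{[X_w]_S\}$ is a basis of $H^S_*(G/B)$ (Theorem~\ref{thm:Arabia}(1), applied to the $S$-invariant Schubert paving of $G/B$), the expansion is unique and its coefficients are the $c_K^w$ of \eqref{eq:PetExp}.

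For the second point I would compute the restricted weights. Every positive root $\alpha$ is a nonnegative integer combination of simple roots, and $\alpha_j|_S=t$ for each simple root, so $\pi(\alpha)=\mathrm{ht}(\alpha)\,t$ is a positive integer multiple of $t$. Therefore any nonnegative integer combination of monomials in $\pi(\alpha_1),\dots,\pi(\alpha_d)$ is a polynomial in the single variable $t$ with nonnegative integer coefficients, and this applies to each $f_i=c_K^w$, giving the claim.

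The main obstacle — the only place where the Peterson setting genuinely departs from the $G/B$ argument recalled before Corollary~\ref{cor:positivecomboschubert} — is the identification in the second paragraph. Here we cannot take $B'=B$, because $P_K$ is only $S$-invariant and not $T$-invariant, so Graham's theorem must be run with the smaller torus $S$ and the smaller solvable group $SN$; the content is precisely that passing from $B$ to $SN$ does not enlarge the collection of invariant subvarieties beyond the Schubert varieties, which is exactly what the coincidence of $N$-orbits and $B$-orbits guarantees.
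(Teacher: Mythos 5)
Your proposal is correct and follows essentially the same route as the paper, which likewise obtains the corollary by applying Theorem~\ref{thm:GrahamHom} with $X=G/B$, $Y$ the ($S$-invariant but not $T$-invariant) Peterson subvariety, $T'=S$, and a solvable group $B'$ containing $S$ as maximal torus, then using that the restricted positive roots are nonnegative multiples of $t$. You merely spell out two steps the paper leaves implicit --- that the $B'=SN$-invariant irreducible subvarieties of $G/B$ are exactly the Schubert varieties (via the coincidence of $N$-orbits with Bruhat cells), and that uniqueness of the coefficients follows from Theorem~\ref{thm:Arabia}(1) --- so there is no substantive difference in approach.
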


As the notation may suggest, it turns out that $\Pet_K$ are themselves Peterson varieties for a smaller group, providing a stability property much like the one Schubert varieties $X_w$ exhibit for inclusions of flag varieties.\footnote{One has to take care in this statement, however, due to the action by $S$. Depending on the choice of an ambient flag variety $G'/B'$, the map $\Pet_K\hookrightarrow G'/B'\hookrightarrow G/B$ map not be $S$-invariant.}

Our first result is that there is a pairing between homology and cohomology of the Peterson variety that mimics the behavior of $G/B$. 
Consider the pairing 
$$\langle \cdot , \cdot \rangle: H^*_S(\Pet) \otimes H_*^S(\Pet) \to H^*_S(pt)
$$
 of equivariant cohomology and equivariant homology defined by 
${\langle a,b \rangle = \int_b a}$.
Let $\iota^*: H_S^*(G/B)\rightarrow H_S^*(\Pet)$ be the pullback in equivariant cohomology.  For each $K\subseteq \Delta$, a {\em Coxeter element} for $K$ is a permutation $v_K$ whose reduced word expression contains exactly one reflection for each element of $K$. 
The following theorem is proved in \cite{GolMihSin21}.

\begin{theorem}[Duality Theorem]
\label{thm:duality}
Let $I, J$ be subsets of the set of simple roots $\Delta$ and let $v_I\in W$ be any Coxeter element for $I$. Define $p_I = \iota^*\sigma_{v_I}$. 
Then
\[ \left\langle p_I, [\Pet_J]_S\right\rangle  = m(v_I) \delta_{I,J} \/, \]
where $m(v_I)$ is the multiplicity of the (unique) intersection point of $X^{v_I} \cap \Pet_I$. In particular, the pairing is nonnegative and nondegenerate.
\end{theorem}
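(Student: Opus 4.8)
The plan is to transport the pairing from $\Pet$ to $G/B$, where the machinery of Section~\ref{se:positivityG/B} applies, and to reduce the entire statement to locating a single Schubert coefficient. First I would use the projection formula for the closed (hence proper) inclusion $\iota\colon\Pet\hookrightarrow G/B$ to write
\[
\langle p_I,[\Pet_J]_S\rangle=\int_{[\Pet_J]_S}\iota^*\sigma_{v_I}=\int_{G/B}\sigma_{v_I}\cdot\iota_*[\Pet_J]_S .
\]
Expanding $\iota_*[\Pet_J]_S=\sum_{w\in W}c_J^w[X_w]_S$ as in Corollary~\ref{co:graham} and invoking the $S$-equivariant Schubert duality $\int_{G/B}\sigma_{v_I}\cdot[X_w]_S=\delta_{v_I,w}$ (the image under $\pi$ of Property~\ref{prop:coSchubTbasis}), this collapses to $\langle p_I,[\Pet_J]_S\rangle=c_J^{v_I}$, the coefficient of $[X_{v_I}]_S$ in the Schubert expansion of the Peterson Schubert class. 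Everything now reduces to understanding this one coefficient.

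The heart of the vanishing is then two triangularity constraints pulling in opposite directions. On one hand, since B\u alibanu's result gives $\Pet_J\subseteq X_{w_J}$ and $H^S_*(X_{w_J})$ is spanned by $\{[X_w]_S:w\le w_J\}$ (Theorem~\ref{thm:Arabia}(1) for the Schubert paving of $X_{w_J}$), the coefficient $c_J^{v_I}$ vanishes unless $v_I\le w_J$, i.e.\ unless $I\subseteq J$. On the other hand, matching homogeneous degrees in $\iota_*[\Pet_J]_S=\sum_w c_J^w[X_w]_S$ (where the left side has complex dimension $|J|$, each $[X_w]_S$ has complex dimension $\ell(w)$, and multiplication by $t\in H^2_S(pt)$ lowers complex dimension by one) forces $c_J^w$ to be homogeneous of degree $\ell(w)-|J|$ in $t$, hence to vanish unless $\ell(w)\ge|J|$; since $\ell(v_I)=|I|$ this says $c_J^{v_I}=0$ unless $|I|\ge|J|$. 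As $I\subseteq J$ already gives $|I|\le|J|$, the two constraints together force $I=J$, producing the factor $\delta_{I,J}$ and all the off-diagonal vanishing.

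It remains to evaluate the diagonal term $c_I^{v_I}$, which by the degree count is a constant. Applying the forgetful map of Property~\ref{prop:formality} identifies it with the ordinary intersection number $\int_{G/B}\sigma_{v_I}\cdot[\Pet_I]$, which, exactly as in Section~\ref{se:positivityG/B}, equals the number of points of $X^{v_I}\cap\Pet_I$ counted with (positive) multiplicity \emph{provided this intersection is proper}. Here I would prove that the intersection is the single point $w_IB$: both $X^{v_I}$ and $\Pet_I$ are $S$-invariant projective subvarieties, so $Z:=X^{v_I}\cap\Pet_I$ is a projective $S$-variety, and its $S$-fixed locus lies in $\{w_KB:K\subseteq I\}\cap X^{v_I}=\{w_KB:I\subseteq K\subseteq I\}=\{w_IB\}$ (using $w_KB\in X^{v_I}\iff w_K\ge v_I\iff I\subseteq K$). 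Since every $S=\mathbb C^*$-orbit closure in a projective variety is either a point or a $\mathbb P^1$ with two distinct fixed points, a positive-dimensional $Z$ would have at least two fixed points; hence $Z=\{w_IB\}$, the intersection is proper, and its local intersection multiplicity is precisely the quantity $m(v_I)$ of the statement. Complex orientations make it a positive integer, so $c_I^{v_I}=m(v_I)$.

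The ``in particular'' clause is then immediate: the Gram matrix of the pairing in the bases $\{p_I\}$ and $\{[\Pet_J]_S\}$ is diagonal with positive integer entries $m(v_I)$, so the pairing is nonnegative and nondegenerate. The step I expect to be the main obstacle is this diagonal evaluation: one must justify that the constant $c_I^{v_I}$ genuinely computes a proper intersection number and that $X^{v_I}\cap\Pet_I$ collapses to $w_IB$. The $\mathbb C^*$-fixed-point argument settles the collapse, but care is needed because $\Pet_I$ and $X^{v_I}$ may be singular at $w_IB$, so ``multiplicity'' must be read as the local (refined/Serre) intersection multiplicity rather than taken to be $1$. By contrast, once the reduction to $c_J^{v_I}$ is in place the two triangularity steps are routine.
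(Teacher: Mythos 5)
Your proposal is correct and follows essentially the same route the paper sketches (and attributes to \cite{GolMihSin21}): reduce the pairing via push--pull and Schubert duality to the coefficient $c_J^{v_I}$ of $[X_{v_I}]_S$ in $\iota_*[\Pet_J]_S$, kill the off-diagonal terms by the two triangularity constraints coming from the poset structure of the Peterson cell paving ($\Pet_J\subseteq X_{w_J}$ forces $I\subseteq J$, the degree count forces $|I|\ge |J|$), and identify the diagonal constant with the multiplicity of the unique intersection point $w_IB$ of $X^{v_I}\cap\Pet_I$. The $\mathbb C^*$-orbit-closure argument you supply for the zero-dimensionality of that intersection is a clean way to fill in the one geometric fact the paper states without proof.
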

The theorem is an algebraic consequence of the fact that the varieties $X^{v_I}$ and $\Pet_I$ intersect at a unique point,
namely $w_I$, the longest element in the subgroup $W_I$ determined by $I$.
Its proof exploits the poset structure of the affine paving by Peterson cells,
along with the duality of Schubert classes in $G/B$. It may be valuable to keep in mind that Theorem~\ref{thm:Arabia} guarantees a dual basis to $\{\Pet_K\}$; the main contribution of the Duality Theorem (Theorem~\ref{thm:duality}) is that each of the dual classes is a positive fraction of a pullback of a Schubert class. As a consequence, $\Pet$ enjoys geometric positivity.

The Duality Theorem has several consequences. It implies that
the equivariant push forward $\iota_*:H_*^S(\Pet) \to H_*^S(G/B)$ is injective. 
Injectivity for ordinary (non-equivariant) homology was proved in \cite{InsTym16}. It is also the key ingredient for the equivariant positivity of  {\em Peterson Schubert calculus,} meaning the coefficients obtained by the product of elements $p_K$ in $H_S^*(\Pet)$ are nonnegative (equivariant) structure constants, the same way that Schubert classes are for $H_T^*(G/B)$.

\begin{corollary}\label{Pbasis}
Choose a
Coxeter element $v_K$ for each subset $K\subseteq \Delta$, and define
$p_K:= \iota^*\sigma_{v_K} \in H^{2|K|}_S(\Pet).$
The classes 
$$\left\{\frac{p_K}{m(v_K)}\right\}_{K \subseteq \Delta}$$ form a 
$H^*_S(pt)$-basis of $H^*_S(\Pet)$. 
\end{corollary}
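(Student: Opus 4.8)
The plan is to deduce the corollary directly from the Duality Theorem (Theorem~\ref{thm:duality}) together with Arabia's theorem (Theorem~\ref{thm:Arabia}), by identifying $\{p_K/m(v_K)\}_{K\subseteq\Delta}$ as the honest dual basis of the geometric homology basis $\{[\Pet_K]_S\}_{K\subseteq\Delta}$. First I would record the two structural facts. By Theorem~\ref{thm:Arabia}(1), applied to the $S$-invariant affine paving $\Pet=\bigsqcup_{K\subseteq\Delta}\Pet_K^\circ$, the module $H_*^S(\Pet)$ is free over $H_S^*(pt)$ with basis $\{[\Pet_K]_S\}_{K\subseteq\Delta}$. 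To invoke Theorem~\ref{thm:Arabia}(2) I must check its two hypotheses: $\Pet$ is complete, being a closed subvariety of the projective variety $G/B$; and $H_S^*(\Pet)$ is torsion-free, since the affine paving produces only even-dimensional cells and hence equivariant formality. Granting these, Theorem~\ref{thm:Arabia}(2) supplies a basis $\{x_K\}_{K\subseteq\Delta}$ of $H_S^*(\Pet)$ dual to $\{[\Pet_K]_S\}$ in the sense that $\int_{\Pet_K} x_J=\delta_{KJ}$; equivalently the pairing $\langle\cdot,\cdot\rangle$ is perfect, so this dual basis is unique.

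Next I would identify $p_I$ in terms of this dual basis. Expanding $p_I=\sum_K c_K\,x_K$ with $c_K\in H_S^*(pt)$ and pairing against $[\Pet_J]_S$, the $H_S^*(pt)$-bilinearity of $\langle\cdot,\cdot\rangle$ together with $\langle x_K,[\Pet_J]_S\rangle=\delta_{KJ}$ gives $\langle p_I,[\Pet_J]_S\rangle=c_J$. Comparing with the Duality Theorem, which asserts $\langle p_I,[\Pet_J]_S\rangle=m(v_I)\,\delta_{I,J}$, forces $c_J=m(v_I)\delta_{I,J}$, that is $p_I=m(v_I)\,x_I$. Since each $m(v_I)$ is a positive integer and $x_I$ is integrally well-defined, it follows that $p_I/m(v_I)=x_I$. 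Therefore $\{p_K/m(v_K)\}_{K\subseteq\Delta}$ coincides with the dual basis $\{x_K\}_{K\subseteq\Delta}$, and in particular is an $H_S^*(pt)$-basis of $H_S^*(\Pet)$, as claimed.

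The genuine geometric content lives in the Duality Theorem, which I am assuming; consequently the main obstacle in the present argument is the bookkeeping surrounding Arabia's theorem rather than any new geometry. The step requiring the most care is verifying the hypotheses of Theorem~\ref{thm:Arabia}(2): completeness is immediate, but torsion-freeness of $H_S^*(\Pet)$ must be extracted from the even-cell affine paving. After that, the only subtlety is to articulate the uniqueness of the dual basis with respect to the perfect pairing, which is precisely what licenses the clean identification $p_I=m(v_I)\,x_I$ and the legitimacy of dividing by the scalar $m(v_I)$.
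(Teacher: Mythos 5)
Your proposal is correct and follows essentially the same route as the paper, whose entire proof is ``Divide by $m(v_K)$ and use the linearity of the pairing'': both arguments rescale $p_K$ so that the Duality Theorem pairing becomes $\delta_{K,J}$ against the homology basis $\{[\Pet_K]_S\}$ supplied by Theorem~\ref{thm:Arabia}(1), and then conclude via the dual basis of Theorem~\ref{thm:Arabia}(2). You merely make explicit the bookkeeping (completeness, torsion-freeness, uniqueness of the dual basis) that the paper leaves implicit.
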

\begin{proof}
Divide by $m(v_K)$ and use the linearity of the pairing. 
\end{proof}

Corollary~\ref{Pbasis} implies that the expansion
\begin{equation}\label{eq:pullbackSchubertToPet}
\iota^*(\sigma_w) = \sum_{K\subseteq \Delta} b_w^K p_K
\end{equation}
uniquely defines structure constants $\{b_w^K\}$, since 
$\{p_K\}_{K\subseteq \Delta}$ forms a $H_S^*(pt)$-basis of $H_S^*(\Pet)\otimes \mathbb Q$.

\begin{corollary}\label{co:SchubExp} 
The coefficients $b_w^K$ in Equation~\eqref{eq:pullbackSchubertToPet} are monomials in $t$ with nonnegative coefficients.
\end{corollary}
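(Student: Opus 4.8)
The plan is to recover each coefficient $b_w^K$ by pairing the identity \eqref{eq:pullbackSchubertToPet} against the geometric homology classes $[\Pet_J]_S$ and then reading positivity directly off Corollary~\ref{co:graham}. Concretely, I would fix $J\subseteq\Delta$ and apply $\langle\,\cdot\,,[\Pet_J]_S\rangle$ to both sides of $\iota^*(\sigma_w)=\sum_K b_w^K p_K$. By the Duality Theorem (Theorem~\ref{thm:duality}), the right-hand side collapses to a single term, since $\langle p_K,[\Pet_J]_S\rangle=m(v_K)\delta_{K,J}$, yielding
$$
\langle \iota^*(\sigma_w),[\Pet_J]_S\rangle = b_w^J\, m(v_J).
$$
In this way the whole problem is reduced to understanding the single pairing on the left-hand side, and in particular to showing it is a nonnegative monomial multiple of the positive integer $m(v_J)$.

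Next I would transport this computation from $\Pet$ to $G/B$ by means of the projection (push–pull) formula for the closed immersion $\iota\colon\Pet\hookrightarrow G/B$. As $\iota$ is proper and $\iota_*[\Pet_J]_S=[P_J]_S$ (the Peterson Schubert variety $\Pet_J$ is precisely the subvariety $P_J\subseteq G/B$, so the pushforward of its fundamental class is the fundamental class of its image), the adjunction $\int_{[\Pet_J]_S}\iota^*\sigma_w=\int_{\iota_*[\Pet_J]_S}\sigma_w$ gives
$$
\langle \iota^*(\sigma_w),[\Pet_J]_S\rangle = \langle \sigma_w,[P_J]_S\rangle.
$$
I would then substitute the expansion of Corollary~\ref{co:graham}, $[P_J]_S=\sum_{w'} c_J^{w'}[X_{w'}]_S$, and invoke the $S$-equivariant form of Schubert duality, namely Property~\ref{prop:coSchubTbasis} transported through the projection $\pi\colon H^*_T\to H^*_S$ (which is natural for the pairing and sends the integer $\delta_{w,w'}$ to itself), so that $\langle\sigma_w,[X_{w'}]_S\rangle=\delta_{w,w'}$. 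Only the $w'=w$ term survives, giving $\langle\sigma_w,[P_J]_S\rangle=c_J^w$ and hence
$$
b_w^J=\frac{c_J^w}{m(v_J)}.
$$

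Finally I would read off the two asserted properties. Nonnegativity is immediate: $c_J^w$ is a polynomial in $t$ with nonnegative coefficients by Corollary~\ref{co:graham}, and $m(v_J)$ is a positive integer, so $b_w^J$ again has nonnegative coefficients. That $b_w^J$ is a \emph{monomial} is a pure degree count: since $\sigma_w$ has cohomological degree $2\ell(w)$ and $p_K$ has degree $2|K|$, the coefficient $b_w^K$ must be homogeneous of degree $2(\ell(w)-|K|)$ in $H^*_S(pt)=\mathbb{Z}[t]$, and a homogeneous element of a polynomial ring in the single variable $t$ is necessarily a scalar multiple of a power of $t$ (and is $0$ when $\ell(w)<|K|$). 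This last point is exactly where the Peterson setting improves on $G/B$: because $S$ is one-dimensional, Graham positivity for these structure constants sharpens from nonnegative polynomials to nonnegative monomials. The only genuinely delicate ingredients are the compatibilities I have left implicit — that the cohomology–homology pairing commutes with the proper pushforward $\iota_*$ and with the restriction $\pi$ from $T$- to $S$-equivariant classes — and I expect verifying these formal properties of equivariant (co)homology, rather than any new geometric input, to be the main thing to nail down carefully.
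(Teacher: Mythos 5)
Your proposal is correct and follows essentially the same route as the paper: pair \eqref{eq:pullbackSchubertToPet} with $[\Pet_J]_S$, collapse the right side via the Duality Theorem (Theorem~\ref{thm:duality}), transport the left side to $G/B$ by push--pull and Schubert duality to get $c_J^w$ from Corollary~\ref{co:graham}, and conclude $b_w^J = c_J^w/m(v_J)\ge 0$. Your closing degree-count observation explaining why the coefficient is a single \emph{monomial} in $t$ (homogeneity in the one-variable ring $H_S^*(pt)$) is a small but welcome addition that the paper's proof leaves implicit.
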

\begin{proof} We pair Equation \eqref{eq:pullbackSchubertToPet} with $[P_L]_S$ to obtain
\begin{align*}
\langle \iota^*(\sigma_w), [P_L]_S\rangle & = \langle  \sum_{K\subseteq \Delta} b_w^K p_K, [P_L]_S\rangle\\
& =  \sum_{K\subseteq \Delta} b_w^K \langle   p_K, [P_L]_S\rangle\\
&=\sum_{K\subseteq \Delta} b_w^K m(v_K)\delta_{K,L} =b_w^L m(v_L),
\end{align*}
where the last line is due to the Duality Theorem (Theorem~\ref{thm:duality}).
On the other hand, using the push-pull formula 
\begin{align*}
\langle \iota^*(\sigma_w), [P_L]_S\rangle & =\langle \sigma_w, \iota_*([P_L]_S)\rangle \\
& = \langle \sigma_w,  \sum_{u\in W} c_L^u [X_u]_S\rangle \\
&= \sum_{u\in W} c_L^u \langle \sigma_w,   [X_u]_S\rangle = c_L^w,
\end{align*}
where the last equality is due to the duality in $H_S^*(G/B)$ of the bases $\{[X_u]\}$ and $\{\sigma_u\}$. Thus
$
c_L^w= b_w^L m(v_L).
$
In particular, since $c_L^w$ is a nonnegative polynomial by Corollary~\ref{co:graham}, and $m(v_L)$ is a nonnegative integer, $b_w^L$ is also a nonnegative polynomial. 
\end{proof}



\begin{theorem}[Equivariant Positivity]
Let $I, J, K$ be subsets of $\Delta$. 
The structure constants of multiplication, $c_{I,J}^K \in H^*_S(pt)$, given by
\begin{equation}
\label{eq:Pexp}
p_I p_J = \sum_K c_{I,J}^K p_K 
\end{equation} 
are polynomials in $t$ with nonnegative coefficients.
\end{theorem}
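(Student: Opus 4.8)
The plan is to exploit the fact that the pullback $\iota^*: H_S^*(G/B) \to H_S^*(\Pet)$ is a ring homomorphism, which immediately factors the product $p_I p_J$ through the already-established positivity of $G/B$. Since $p_I = \iota^*\sigma_{v_I}$ and $p_J = \iota^*\sigma_{v_J}$, functoriality of the cup product gives
\[
p_I p_J = \iota^*(\sigma_{v_I})\,\iota^*(\sigma_{v_J}) = \iota^*\!\left(\sigma_{v_I}\sigma_{v_J}\right).
\]
This reduces the problem to two positivity inputs that are already in hand: positivity of the product $\sigma_{v_I}\sigma_{v_J}$ inside $H_S^*(G/B)$, and positivity of the expansion of a pulled-back Schubert class in the basis $\{p_K\}$.

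First I would expand the product in $G/B$. By Corollary~\ref{cor:G/Bstructureconsts} together with the Naturality remark, after projecting the $T$-weights to $S$ (where every simple root maps to $t$), one has
\[
\sigma_{v_I}\sigma_{v_J} = \sum_{w\in W} c_{v_I,v_J}^w\,\sigma_w,
\]
with each $c_{v_I,v_J}^w$ a polynomial in $t$ with nonnegative coefficients. Applying $\iota^*$ and substituting the expansion $\iota^*(\sigma_w) = \sum_{K\subseteq\Delta} b_w^K p_K$ from Equation~\eqref{eq:pullbackSchubertToPet}, I would collect the coefficients of each $p_K$ to obtain
\[
c_{I,J}^K = \sum_{w\in W} c_{v_I,v_J}^w\, b_w^K.
\]
By Corollary~\ref{co:SchubExp} each $b_w^K$ is a nonnegative polynomial in $t$, so $c_{I,J}^K$ is a sum of products of nonnegative polynomials in $t$, and is therefore itself a nonnegative polynomial in $t$. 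That this computation genuinely identifies the structure constants follows from $\{p_K\}_{K\subseteq\Delta}$ being an $H_S^*(pt)$-basis, by Corollary~\ref{Pbasis}.

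The conceptual work has really been front-loaded into the earlier corollaries, so the remaining argument is a short assembly. The one point requiring genuine care --- and where I expect the main obstacle to lie --- is ensuring that the positivity of the $G/B$ structure constants survives the projection $\pi: H_T^*(pt)\to H_S^*(pt)$: a priori Graham positivity is stated for the full torus $T$ in the positive roots $\alpha_1,\dots,\alpha_d$, and one must confirm that specializing every $\alpha_i$ to $t$ preserves nonnegativity of coefficients rather than introducing cancellation. Since the projection sends each monomial $\alpha_1^{i_1}\cdots\alpha_d^{i_d}$ to $t^{i_1+\cdots+i_d}$ with its coefficient unchanged in sign, no cancellation can occur and positivity is preserved; this is exactly the content of the Naturality remark. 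The deeper input --- that the dual classes to $\Pet_K$ are positive fractions of pullbacks of Schubert classes, which is what makes the $b_w^K$ positive in the first place --- is the Duality Theorem (Theorem~\ref{thm:duality}), and it is taken as available here through Corollary~\ref{co:SchubExp}.
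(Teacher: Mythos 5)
Your proposal is correct and follows essentially the same route as the paper's own proof: factor $p_Ip_J$ through $\iota^*(\sigma_{v_I}\sigma_{v_J})$, expand via Corollary~\ref{cor:G/Bstructureconsts} and Equation~\eqref{eq:pullbackSchubertToPet}, and conclude $c_{I,J}^K=\sum_w c_{v_I,v_J}^w b_w^K$ with both factors nonnegative. Your added remark on why positivity survives the specialization $\alpha_i\mapsto t$ is a sound elaboration of the Naturality remark that the paper leaves implicit.
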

\begin{proof}

Observe that
\begin{align*}
p_I  p_J &= \iota^*\sigma_{v_I}\iota^*\sigma_{v_J}\\
&= \iota^*(\sigma_{v_I}\cdot \sigma_{v_J})\\
&= \iota^*\left(\sum_{w\in W} c_{v_I, v_J}^w\sigma_w\right)\\
&= \sum_{w\in W} c_{v_I, v_J}^w\iota^*(\sigma_w),
\end{align*}
where $c_{v_I, v_J}^w$ have the desired positivity properties, because these are structure constants occurring in the equivariant Schubert calculus of $G/B$ (Corollary \ref{cor:G/Bstructureconsts}). 
Together with Equation~\eqref{eq:pullbackSchubertToPet}, we obtain
\begin{align*}
p_I  p_J &=  \sum_{w\in W} c_{v_I, v_J}^w \sum_{K\subseteq \Delta} b_w^K p_K\\
&=  \sum_{w\in W} \sum_{K\subseteq \Delta} \left(c_{v_I, v_J}^w b_w^K\right) p_K.
\end{align*}
In particular,  the structure constants occurring in Equation~\ref{eq:Pexp} are
$$
c_{I, J}^K = \sum_{w\in W} c_{v_I, v_J}^w b_w^K.
$$
Nonnegativity of the coefficients appearing on the left hand side now follows from the fact that both factors in each summand are polynomials with nonnegative coefficients (Corollaries~\ref{cor:G/Bstructureconsts} and \ref{co:SchubExp}).
\end{proof}

While we have described why the coefficients are nonnegative, we have not provided a formula for their product.
Many formulas - including manifestly positive ones - exist for some of these structure constants. 
Harada and Tymoczko found an equivariant Monk rule in type A \cite{HarTym11}. A formulation of the results for ordinary cohomology was described using left-right diagrams in \cite{AbeHorKuwZen21}. 
Drellich generalized the equivariant Monk formula to all types, while the current author and Singh found Chevalley and Giambelli formulas in all types \cite{GolSin22}.

The current author and Gorbutt in \cite{GolGor22} generalized the Harada-Tymoczko rule to a positive combinatorial formula for all Peterson Schubert products in type A. To give a flavor, we provide the formula found in \cite{GolGor22} for $c_{I,J}^K$ for $I, J,$ and $K$  nonempty consecutive sequences in $\{1, \dots, n-1\}$, with $K\supseteq I\cup J$ and $|K|\leq |I|+|J|$. Denote the largest element of the consecutive set $J$ by $\mathcal H_J$ and the smallest element by $\mathcal T_J$. Then
$$
c_{I, J}^K = a! {\mathcal H_I - \mathcal T_J +1\choose a, \mathcal T_I-\mathcal T_K, \mathcal H_K-\mathcal H_J}
{\mathcal H_J - \mathcal T_I +1\choose a, \mathcal T_J-\mathcal T_K, \mathcal H_K-\mathcal H_I} t^a
$$
where $a= |I|+|J|-|K|$. Observe that the formula is manifestly nonnegative.

\section{Positivity for Regular Hessenberg varieties?}\label{se:Hess} 

The Peterson variety is but a single example of a regular Hessenberg variety $Hess(n,H)$, and all such Hessenberg varieties have been shown to support a paving by affine cells \cite{Pre13}.  Given the powerful results that may be derived from the paving by affines, one may wonder whether the resulting basis for the homology exhibits positivity. Indeed, Theorem~\ref{thm:Arabia} shows that the closure of the affines forms a basis for the homology, and by duality, the cohomology. One can alternatively ask whether the cohomology classes multiply in a positive way, i.e. whether the dual basis specified by Theorem~\ref{thm:Arabia} leads to positivity for the cohomology of $Hess(x, H)$. Suppose $Hess(x, H)$ has a paving by affines, with $\{\beta_a\}$ the basis for $H^*(Hess(x,H))$ guaranteed by Theorem~\ref{thm:Arabia}. The product structure in this basis is what we call {\em Hessenberg Schubert calculus.}
\begin{question} Suppose $Hess(x, H)$ has a paving by affines. 
Is Hessenberg Schubert calculus positive for $Hess(x,H)$?
\end{question}

There are circumstances in which one might expect to leverage the positivity for $G/B$. Let $n\in \mathfrak g$ be a regular nilpotent element. 
The natural map 
\begin{equation}\label{eq:cohomsurj} 
H^*(G/B) \rightarrow H^*(Hess(n, H))
\end{equation}
induced by the inclusion $Hess(n,H)\rightarrow G/B$ is a surjection \cite{AbeHorMasMurSat20}.  In contrast, when $x$ is not nilpotent, the map $H^*(G/B) \rightarrow H^*(Hess(x,H))$ is generally {\em not} a surjection, evidenced by very large Betti numbers in low degree.

It follows from \eqref{eq:cohomsurj} that a 
$\mathbb Q$  basis of $H^*(Hess(n,H))$ can be obtained by pulling back Schubert classes from $G/B$. 
Positivity of Schubert calculus for $G/B$ does not immediately imply the same for $Hess(n,H)$, however there is additional structure to exploit. Under the dual pairing, the map on homology groups is injective. Since a basis of $H_*(Hess(n,H))$ is provided by the affine paving, one can push forward these classes to $G/B$, do some footwork in $G/B$ and then pullback to $Hess(n,H)$ to obtain positivity. Indeed, this is the strategy we followed for the Peterson variety. 

The magic for Peterson varieties is that elements of the dual basis
to $\{[P_K]\}$ are positive combinations of pullbacks of Schubert classes (divided by a multiplicity). This leads one to the following refinement of the first question:

\begin{question}\label{quest:pullback} Let $n$ be a regular nilpotent element of $\mathfrak g$. 
Are elements {$\beta_a\in H^*(Hess(n,H))$ } of the dual basis to the paving by affines of $Hess(n, H)$ pullbacks of sums of positive coefficients times Schubert classes?
\end{question}

The proof that $\eqref{eq:cohomsurj}$ is surjective requires identifying $H^*(Hess(n,H))$ as a $W$-invariant subalgebra of $H^*(Hess(s,H))$ for $s$ semisimple; the proof does not show that there is a geometric representative of the basis dual to the basis obtained by paving $Hess(n,H)$.

Nonequivariantly, Insko, Tymoczko and Woo showed that the cohomology class (and K-theory class) of a regular Hessenberg variety in type A is represented by a certain Schubert polynomial (Grothendieck polynomial). Combined with results due to \cite{EnoHorNagTsu22} showing that an additive basis in the nilpotent case can be achieved with smaller Hessenberg varieties, one may be able to see rather explicitly Graham positivity for the homology. The dual basis has not been described with Schubert polynomials. 

If $Hess(n,H)$ and its paving are invariant under a subtorus $S$ of $T$, one would expect an equivariant inclusion on homology in this case as well, resulting in a surjection $H^*_S(G/B)\rightarrow H_S^*(Hess(n,H))$ of equivariant cohomology.  As before, one can restrict the positive weights $\alpha_i$ to $S$ to obtain the right notion of ``positive" and then ask Question~\ref{quest:pullback} equivariantly. 

\begin{question} Is equivariant Hessenberg Schubert calculus for regular nilpotent Hessenberg varieties positive? 
\end{question}

In type A, the surjection in cohomology extends to Springer fibers ($H=\mathfrak b$), even when the nilpotent element $n$ is not regular; \cite{KumPro12}  shows this holds equivariantly as well. 
\begin{question} Is equivariant Hessenberg Schubert calculus for nonregular nilpotent Springer fibers positive? 
\end{question}

Another nilpotent case in which the cohomology is known to be surjective is the type A case in which $n$ is the matrix with $1$ in the top right corner entry of the matrix; in this case, the Hessenberg varieties are unions of Schubert varieties \cite{AbeCro16}.

In all these cases, the surjection in cohomology implies an injection 
$$
H^S_*(Hess(x,H))\rightarrow H^S_*(G/B)
$$
where Theorem~\ref{thm:GrahamHom} applies and to leads to positivity for homology.  
 if we could realize each of the dual classes $\beta_a$ as a Poincar\'e dual class to a specific subvariety of $G/B$ restricted to $Hess(x,H)$, we may be able to attain proof of positivity of the product in cohomology. Thus we close with the following question:
\begin{question}
For which nonregular nilpotent elements $n\in \mathfrak g$ is there an equivariant surjection
$$
H_S^*(G/B)\rightarrow H_S^*(Hess(n,H))?
$$
\end{question}

\bibliographystyle{amsalpha}

\end{document}